\numberwithin{equation}{section}
\numberwithin{theorem}{section}
\newtheorem{mtheorem}[theorem]{Main Theorem}
\newtheorem{problem}[theorem]{Problem}
\newtheorem{Corollary}[theorem]{Corollary}
\newtheorem{Lemma}[theorem]{Lemma}
\newtheorem{Proposition}[theorem]{Proposition}
 { \theoremstyle{definition}

\newtheorem{Remark}[theorem]{Remark} }
\newcommand{\R}{\mathbb{R}}
\newcommand{\N}{\mathbb{N}}
\newcommand{\Z}{\mathbb{Z}}
\newcommand{\F}{\mathbb{F}}
\newcommand{\B}{\mathrm{B}}
\newcommand{\E}{\mathrm{E}}
\newcommand\Sym{\mathfrak S}
\newcommand{\FF}{\operatorname{F}}
\newcommand{\GG}{\operatorname{G}}
\newcommand{\cat}{\operatorname{cat}}
\newcommand{\id}{\operatorname{id}}
\newcommand{\wgt}{\operatorname{wgt}}
\begin{document}

\allowdisplaybreaks

\newcommand{\arXivNumber}{1712.07930}

\renewcommand{\thefootnote}{}

\renewcommand{\PaperNumber}{022}

\FirstPageHeading

\ShortArticleName{Counting Periodic Trajectories of Finsler Billiards}

\ArticleName{Counting Periodic Trajectories of Finsler Billiards\footnote{This paper is a~contribution to the Special Issue on Algebra, Topology, and Dynamics in Interaction in honor of Dmitry Fuchs. The full collection is available at \href{https://www.emis.de/journals/SIGMA/Fuchs.html}{https://www.emis.de/journals/SIGMA/Fuchs.html}}}

\Author{Pavle V.M.~BLAGOJEVI\'{C}~$^{\dag^1\dag^2}$, Michael HARRISON~$^{\dag^3}$, Serge TABACHNIKOV~$^{\dag^4}$\newline and G\"unter M. ZIEGLER~$^{\dag^1}$}

\AuthorNameForHeading{P.V.M.~Blagojevi\'{c}, M.~Harrison, S.~Tabachnikov and G.M.~Ziegler}

\Address{$^{\dag^1}$~Institut f\"ur Mathematik, FU Berlin, Arnimallee 2, 14195 Berlin, Germany}
\EmailDD{\href{mailto:blagojevic@math.fu-berlin.de}{blagojevic@math.fu-berlin.de}, \href{mailto:ziegler@math.fu-berlin.de}{ziegler@math.fu-berlin.de}}
\URLaddressDD{\url{http://page.mi.fu-berlin.de/blagojevic/}\newline
\hspace*{15mm}\url{http://www.mi.fu-berlin.de/math/groups/discgeom/ziegler}}

\Address{$^{\dag^2}$~Mathematical Institut SASA, Knez Mihailova 36, 11000 Beograd, Serbia}

\Address{$^{\dag^3}$~Department of Mathematical Science, Carnegie Mellon University,\\
\hphantom{$^{\dag^3}$}~Pittsburgh, PA 15213, USA}
\EmailDD{\href{mailto:mah5044@gmail.com}{mah5044@gmail.com}}
\URLaddressDD{\url{https://sites.google.com/site/michaelharrisonmath/}}

\Address{$^{\dag^4}$~Department of Mathematics, Pennsylvania State University, University Park, PA 16802, USA}
\EmailDD{\href{mailto:tabachni@math.psu.edu}{tabachni@math.psu.edu}}
\URLaddressDD{\url{http://www.personal.psu.edu/sot2/}}

\ArticleDates{Received September 11, 2019, in final form March 25, 2020; Published online April 03, 2020}

\Abstract{We provide lower bounds on the number of periodic Finsler billiard trajectories inside a quadratically convex smooth closed hypersurface $M$ in a $d$-dimensional Finsler space with possibly irreversible Finsler metric. An example of such a system is a billiard in a sufficiently weak magnetic field. The $r$-periodic Finsler billiard trajectories correspond to $r$-gons inscribed in $M$ and having extremal Finsler length. The cyclic group $\Z_r$ acts on these extremal polygons, and one counts the $\Z_r$-orbits. Using Morse and Lusternik--Schnirelmann theories, we prove that if $r\ge 3$ is prime, then the number of $r$-periodic Finsler billiard trajectories is not less than $(r-1)(d-2)+1$. We also give stronger lower bounds when $M$ is in general position. The problem of estimating the number of periodic billiard trajectories from below goes back to Birkhoff. Our work extends to the Finsler setting the results previously obtained for Euclidean billiards by Babenko, Farber, Tabachnikov, and Karasev. }

\Keywords{mathematical billiards; Finsler manifolds; magnetic billiards; Morse and Lusternik--Schnirelmann theories; unlabeled cyclic configuration spaces}

\Classification{37J45; 55R80; 70H12}

\begin{flushright}
\begin{minipage}{65mm}
\it Dedicated to D.~Fuchs on the occasion\\ of his 80th anniversary
\end{minipage}
\end{flushright}

\renewcommand{\thefootnote}{\arabic{footnote}}
\setcounter{footnote}{0}

\section{Introduction}

The study of mathematical billiards goes back to G.D.~Birkhoff who wrote in~\cite{Birkhoff1927}:
\begin{quote}
\dots\ in this problem the formal side, usually so formidable in dynamics, almost completely disappears, and only the interesting qualitative questions need to be conside\-red.
\end{quote}
One of the main motivations for the study of billiards has been their relation with mathematical physics and statistical mechanics, namely, with the Boltzmann ergodic hypothesis. We refer to the books \cite{Chernov-Markarian,Kozlov-Treshchev,Tabachnikov1995,Tabachnikov2005} for various aspects of mathematical billiards.

\subsection{Billiards in Euclidean geometry}

A Birkhoff billiard table is bounded by a smooth strictly convex closed hypersurface $M$ in $\R^d$. The billiard dynamical system describes the motion of a free particle inside $M$ with elastic reflection off the boundary. That is, the point (billiard ball) moves with unit speed along a~straight line until it hits the boundary~$M$; at the impact point, the normal component of the velocity instantaneously changes sign, while the tangential component remains the same, and the point continues its rectilinear motion. In dimension two, this is the familiar law of geometrical optics: the angle of incidence equals the angle of reflection.

An $r$-periodic billiard trajectory in $M$ is an $r$-tuple of points $(x_1,\ldots,x_r)$ of $M$ such that $x_i \neq x_{i+1}$ and the billiard reflection in $M$ takes segment $x_{i-1} x_i$ to $x_i x_{i+1}$ for $i=1,\ldots,n$ (as usual, the indices are understood cyclically, that is, $x_{r+i}=x_i$).

The Dihedral group of symmetries of a regular $r$-gon $D_{r}$ acts naturally on the set of all periodic billiard trajectories of period $r$ by cyclically permuting the points and reversing the orientation. Thus, when counting periodic orbits, it is natural to count such dihedral orbits.

\begin{problem}\label{problem : number of periodic in Euclidean case}
Let $d\geq 2$ and $r\geq 2$ be integers, and let $M^{d-1}\subseteq\R^d$ be a smooth closed strictly convex hypersurface. Estimate below the number $N_E\big(M^{d-1},r\big)$ of equivalence classes of periodic billiard trajectories of period $r$ inside $M^{d-1}$ modulo the action of the dihedral group $D_{r}$.
\end{problem}

The way we have formulated the problem, multiple trajectories are included into the count, so that for example a 2-periodic trajectory traversed thrice and a 3-periodic trajectory traversed twice, contribute to the number of 6-periodic trajectories. Of course, this issue is not a concern if $r$ is a prime.

The first progress in addressing the question posed in Problem \ref{problem : number of periodic in Euclidean case} was made by Birkhoff in 1927~\cite{Birkhoff1927}. He considered the case $d=2$, and proved that there exist at least two $r$-periodic orbits with every rotation number coprime with $r$, which implies that $N_E\big(M^{1},r\big)\ge 2\phi\big(\big\lfloor \tfrac{r}{2}\big\rfloor\big)$, where $\phi$ is the Euler totient function. We remark that this lower bound holds for the number of {prime} periodic trajectories. Birkhoff deduced his result from Poincar\'e's geometric theorem, that Poincar\'e published without proof shortly before his death and that Birkhoff proved a year later.

The concept of rotation number is not available in dimensions $d\geq 3$, but one can use the variational approach to the problem. Periodic billiard trajectories correspond to the critical points of the length function on $r$-tuple of points $(x_1,\ldots,x_r)$, that is, on $r$-gons inscribed in~$M$:
\begin{gather} \label{genfunct}
L(x_1,\ldots,x_r) = \sum_{i=1}^r |x_i - x_{i+1}|.
\end{gather}
This function is $D_{r}$-invariant.

The first to address Problem \ref{problem : number of periodic in Euclidean case} for $d=3$ was Babenko \cite{Babenko1990}, whose approach was based on analyzing critical points of the length function~$L$. Although his paper contained an error, the main idea was rescued and refined by Farber and Tabachnikov who established several lower bounds for $N_E\big(M^{d-1},r\big)$.

When dealing with critical points of functions, one applies either the Morse theory or the Lusternik--Schnirelmann theory.
The former usually gives stronger lower bounds on the number of critical points, but it applies to a more restrictive set of smooth functions, namely, Morse (or Morse--Bott) functions; the latter sometimes gives weaker lower bounds on the number of critical points, but it does not rely on genericity assumptions on the functions involved.

The variational methods subsequently produced a number of improved lower bounds in a~sequence of papers by a~number of authors:
\begin{itemize}\itemsep=0pt

\item {\em Farber \& Tabachnikov} \cite[Theorem~1(B), p.~555]{Farber-Tabachnikov2002-01} and \cite[Theorem~3]{Farber-Tabachnikov2002-02}.
		Let $d\geq 3$ be an integer, let $r\geq 3$ be an odd integer, and let $M^{d-1}$ be a generic smooth closed strictly convex hypersurface.
		Then
		\[N_{E}\big(M^{d-1},r\big)\geq (r-1)(d-1).\]
This is a Morse-theoretical result, that is why $M$ is assumed to be generic. The next results are Lusternik--Schnirelmann-theoretical, and they hold without the genericity assumption.
		
\item {\em Farber \& Tabachnikov} \cite[Theorem~1(A), p.~554]{Farber-Tabachnikov2002-02}.
	 Let $d\geq 4$ be an integer, let $r\geq 3$ be an odd integer, and let $M^{d-1}$ be a smooth closed strictly convex hypersurface.
	 Then
	 \[N_{E}\big(M^{d-1},r\big)\geq \lfloor\log_2 (r-1)\rfloor +d-1.\]

\item {\em Farber} \cite[Theorem~2, p.~589]{Farber2002-II}.
	Let $r\geq 3$ be an odd prime, and let $M^{d-1}$ be a smooth closed strictly convex hypersurface.
	
\begin{itemize}\itemsep=0pt
	\item If $d\geq 4$ is even, then \[N_{E}\big(M^{d-1},r\big)\geq r.\]
	\item If $d\geq 3$ is odd, then \[N_{E}\big(M^{d-1},r\big)\geq \frac{r+1}{2}.\]
	\end{itemize}

\item {\em Karasev} \cite[Theorem~1, p.~424]{Karasev2009}.
 Let $d\geq 3$, let $r\geq 3$ be prime, and let $M^{d-1}$ be a smooth closed strictly convex hypersurface.
 Then
		\[N_{E}\big(M^{d-1},r\big)\geq (r-1)(d-2)+2.\]
\end{itemize}

Let us also mention Mazzucchelli's paper \cite{Mazzucchelli} concerning the multiplicity of Birkhoff billiard periodic trajectories whose period is a power of a fixed prime number.

\subsection{Billiards in Finsler geometry}\label{BilFin}

Our goal is to extend the above described results to periodic trajectories in Finsler billiards.

From the point of view of physics, Finsler geometry describes the propagation of light in a~medium that is not necessarily homogeneous or isotropic. The speed of light depends on the point and the direction, and is given by a smoothly varying norm on the tangent spaces to the medium thought of as a smooth manifold. We allow these norms to be asymmetric.
These norms need not correspond to inner products, which is the case when the metric is Riemannian. To quote Chern's description~\cite{Chern1996}, ``Finsler geometry is just Riemannian geometry without the quadratic restriction''.

The distance $f(A,B)$ between points $A$ and $B$ is defined as the least time it takes light to travel from $A$ to $B$; in general, $f(A,B) \neq f(B,A)$. The trajectories of light are Finsler geodesics.
See Section \ref{Fgeo} for precise definitions.

\looseness=-1 An example of a Finsler manifold is a Minkowski space, that is, a finite-dimensional normed space. Another example is a projective metric in a domain in the projective space, a Finsler metric whose geodesics are straight lines. Hilbert's fourth problem asked to describe all projective Finsler metrics (a projective Riemannian metric is a metric of constant curvature), see, e.g.,~\cite{Hilbert}.

The first steps in the study of billiards in Finsler geometry were made by Gutkin and Tabachnikov in~\cite{Gutkin2002}. In this paper, the Finsler billiard reflection is defined (see Section~\ref{sec:reflectionlaw} below) using the variational approach, and Minkowski billiards are studied in some detail. Minkowski billiards have close relations with convex geometry (Mahler's conjecture) and symplectic topology (symplectic capacities), see \cite{Artstein2,Artstein1}.

We consider a domain in a Finsler manifold bounded by a smooth closed hypersurface $M^{d-1}$, a Finsler billiard table. We assume that $M$ is {\it strictly convex} in the following sense:
\begin{itemize}\itemsep=0pt
\item for every pair of points $x$, $y$ inside $M$, there is a unique geodesic from $x$ to $y$, and a unique geodesic from $y$ to $x$, both contained inside $M$ and distance-minimizing;
\item $M$ is quadratically convex: every geodesic tangent to~$M$ has second order contact with~$M$, and not higher order.
\end{itemize}
In particular, these conditions imply that~$M$ is diffeomorphic to the sphere $S^{d-1}$. The convexity is an open condition. In the Minkowski setting, this condition is the same as in Euclidean space;
see the end of Section~\ref{magnetic} for the geometric meaning of convexity in the case of planar magnetic billiards.

We are interested in periodic Finsler billiard trajectories inside $M$. They still correspond to critical points of the analog of the length function~(\ref{genfunct})
\[
\Lambda(x_1,\ldots,x_r) = f(x_1,x_2) + f(x_2,x_3) + \cdots + f(x_r,x_1),
\]
however this function has less symmetry than in the Euclidean case: it is invariant under the cyclic permutations of the points, but not under the orientation reversal. Thus $\Lambda(x_1,\ldots,x_r)$ is $\Z_r$-invariant, but not necessarily $D_{r}$-invariant.
Accordingly, when counting periodic Finsler billiard trajectories, we count $\Z_r$-orbits.

Denote by $N_F\big(M^{d-1},r\big)$ the number of equivalence classes of periodic Finsler billiard trajectories of period $r$ inside $M^{d-1}$ modulo the action of the cyclic group $\Z_r$.

\subsection{Statement of main results}

Our main result is as follows.

\begin{mtheorem} \label{mainthm}
Let $d\geq 3$ be an integer and $r\geq 3$ be a prime. Consider Finsler billiard inside a smooth closed hypersurface $M^{d-1}$, satisfying the above formulated strict convexity assumptions. Then
\begin{enumerate}\itemsep=0pt
\item[\rm (A)] $N_F\big(M^{d-1},r\big) \ge (r-1)(d-2)+1$.

\item[\rm (B)] For a generic $M$,
\begin{enumerate}\itemsep=0pt
\item[\rm (1)] if $d$ is even, then $N_F\big(M^{d-1},r\big) \ge (r-1)d$;
\item[\rm (2)] if $d$ is odd, then $N_F\big(M^{d-1},r\big) \ge(r-1)(d-1)$.
\end{enumerate}
\end{enumerate}
\end{mtheorem}

The general position assumption in case (B) is the assumption that the length $\Lambda(x_1,\ldots,x_r)$ is a Morse function.
The latter condition is generic in the sense that it holds in an open dense subset of strictly convex hypersurfaces, considered in the Whitney $C^{\infty}$ topology.
For Euclidean billiards, this is deduced in \cite[Lemma~4.4]{Farber-Tabachnikov2002-01} from an appropriate version of the multi-jet transversality theorem. A similar argument works in the Finsler case; we do not elaborate on it here.

\begin{Remark}The rate of growth of the numbers $N_F\big(M^{d-1},r\big)$, provided by Theorem \ref{mainthm}, is the same as in the above described results for Euclidean billiards: it is, roughly, $rd$.
Since we count $\Z_r$-orbits of periodic Finsler billiard trajectories, rather than $D_r$-orbits, one might expect the numbers $N_F\big(M^{d-1},r\big)$ to be about twice as large as the numbers $N_E\big(M^{d-1},r\big)$.

For example, consider Euclidean billiard inside a strictly convex closed smooth hypersurface, and switch on a weak magnetic field. One may expect each periodic trajectory in absence of the magnetic field to give rise to two periodic magnetic billiard trajectories, see Fig.~\ref{triangle}.

\begin{figure}[ht!]\centering
\includegraphics[width=1.4in]{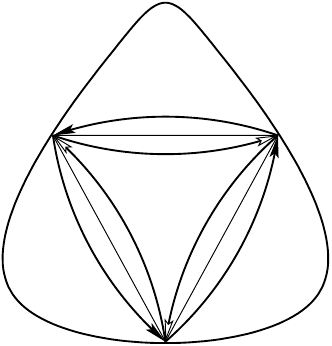}

\caption{A $3$-periodic billiard trajectory giving rise to two $3$-periodic magnetic billiard trajectories.}
\label{triangle}
\end{figure}

In dimension two, this is indeed the case: the Finsler billiard map is an area preserving twist map, and it has two $r$-periodic trajectories for every rotation number $k$ coprime with $r$. If the metric is symmetric, then the orbits corresponding to the rotation numbers $k$ and $r-k$ differ only by the orientation, and they are counted as one, but in the asymmetric case, these are indeed different orbits.

We do not know how close the lower bounds of Theorem~\ref{mainthm} are to being sharp. One may expect a notable difference between the reversible and non-reversible cases. In the related problem of closed geodesics, it is known that every Riemannian metric on the 2-sphere possesses infinitely many geometrically distinct closed geodesics \cite{Bangert, Franks}, but a Finsler metric may have precisely two distinct prime closed geodesics, as in the well known Katok example~\cite{Katok} (in which the two closed geodesics are the inverses of each other). Do similar examples exist for Finsler billiards?
\end{Remark}

\section[From geometry of billiards to topology of cyclic configuration spaces]{From geometry of billiards to topology\\ of cyclic configuration spaces}\label{tech}

\subsection{Introduction to Finsler geometry}\label{Fgeo}

We begin with a very brief introduction to Finsler geometry. For a thorough treatment, see \cite{Alvarez-Duran,Bao,Shen}.

A Finsler metric on a smooth manifold $U$ is determined by a smooth non-negative fiberwise-convex Lagrangian function $L\colon  TU \longrightarrow [0,\infty)$, with the property that on each tangent spa\-ce~$T_yU$, $L$~is positively homogeneous: $L(y,tv) = tL(y,v)$ for non-negative $t$ and positive off the zero section. The restriction of $L$ to any tangent space $T_yU$ gives the Finsler length of vectors in~$T_yU$.

The vectors in $T_yU$ of unit Finsler length form a strictly convex hypersurface $I \subset T_yU$, called the \emph{indicatrix}, which plays the role of the unit sphere in Riemannian geometry. We make the additional assumption that each indicatrix is quadratically convex. Specifying a smooth field of indicatrices on $U$ is an equivalent method of defining a Finsler metric on~$U$.

The Finsler metric on $TU$ induces a notion of distance on the base manifold~$U$. The length of a smooth curve $\gamma\colon  [a,b] \longrightarrow U$ is given by the integral
\[
\operatorname{Length}(\gamma) = \int_a^b L(\gamma(t),\gamma'(t))\,{\rm d} t.
\]
The length of $\gamma$ is independent of the parametrization, and a Finsler geodesic is an extremal of the length functional. In particular, a Finsler geodesic $\gamma$ satisfies the Euler--Lagrange equations:
\[
L_{vv}\big(\gamma(t),\gamma'(t)\big) \cdot \gamma^{\prime\prime}(t) + L_{vy}\big(\gamma(t),\gamma'(t)\big) \cdot \gamma^{\prime}(t) = L_y\big(\gamma(t),\gamma'(t)\big).
\]
In this formula we use a shorthand notation, so that $L_v$ is the vector $(L_{v_1},\ldots,L_{v_n})$ and $L_{vv}$ is the matrix $(L_{v_i v_j})$,
etc.

For each pair of points $x$ and $y$ there is a corresponding \emph{Finsler distance} $f(x,y)$, equal to the length of the shortest oriented geodesic from $x$ to $y$. We stress that since geodesics are not necessarily reversible, this distance function $f$ need not be symmetric, and therefore $f$ is not a~genuine metric on~$U$.

The \emph{figuratrix} $J \subset T_y^*U$ is the ``unit sphere of the cotangent space'', defined as follows: for a vector $v$ in the indicatrix $I \subset T_yU$, there is a unique covector $D_v$, defined by the properties that $\operatorname{Ker}(D_v) = T_vI$ and $D_v(v) = 1$. The map $D\colon I \longrightarrow T_y^*U$ which maps $v$ to $D_v$ is the \emph{Legendre transform}, and the image is the figuratrix $J$. The dual transform $D^*\colon J \longrightarrow I$ is defined similarly. The Legendre transform is an involution: the composition of $D$ and $D^*$ is the identity map.
Rightfully, the Legendre transform is a smooth bundle map from the indicatrix bundle to the figuratrix bundle, but we will often use the notation $D\colon I \longrightarrow J$ when the basepoint $y$ is understood.

\subsection{The Finsler billiard map}\label{sec:reflectionlaw}

Let $U$ be a smooth $d$-dimensional Finsler manifold. Let $X \subset U$ be a compact $d$-dimensional submanifold with boundary $M = \partial X$. We assume that $X$ and $M$ satisfy the strict convexity assumptions formulated in Section~\ref{BilFin}. We will refer to~$X$ as a \emph{billiard table}.

Let $xy$ and $yz$ be two oriented geodesic segments, where $x,z \in X$ are in the interior of the billiard table, and $y \in M$ is on the boundary. We say that $yz$ is the \emph{Finsler billiard reflection} of~$xy$ if $y$ is a critical point of the distance function $f(x,\cdot) + f(\cdot,z)\colon M \longrightarrow [0,\infty)$. This relation is \emph{not symmetric}, so it does not imply that $yx$ is the billiard reflection of~$zy$.

We describe the reflection law in the context of the Finsler setup following the treatment in~\cite{Gutkin2002}. Although the Finsler metric there is assumed to be symmetric, the reflection law is the same.

For each $y \in M$, let $p \in J \subset T_y^*U$ be the \emph{conormal}, defined as the unit cotangent vector which vanishes on $T_yM$ and is positive on the outward vectors. Let~$xy$ be an incoming geodesic and $yz$ the reflected outgoing geodesic, corresponding respectively to tangent vectors~$u$ and~$v$ in~$I$. The reflection $xy$ to~$yz$ manifests as the following relation in the cotangent space.

\begin{Lemma}[Finsler billiard reflection law]\label{lem:cotangentlaw}
The covector $D_u - D_v$ is conormal to $T_yM$; in particular $D_u - D_v = tp$ for some $t > 0$, see Fig.~{\rm \ref{reflfig}}.
\end{Lemma}

\begin{figure}[ht!]\centering
\includegraphics[width=3.4in]{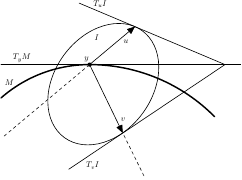}
\caption{The Finsler billiard reflection in dimension two.}\label{reflfig}
\end{figure}

\begin{proof}The point $y \in M$ is a relative extremum of the function $f(x,\cdot) + f(\cdot,z)$, so the differential of this function is conormal to the tangent hyperplane $T_yM$. Let us compute this differential to show that it is $D_u - D_v$.

Fix a point $x \in X$, and consider the wave propagation from $x$. Let $c$ be a non-singular point of the wave front $\mathcal{F}_{t_0}$ such that the oriented geodesic segment $xc$ of length $t_0$ is contained in the interior of $X$. The Finsler length function $f(x,\cdot)$ extends to a smooth function in a neighborhood of~$c$. More precisely, for every point $d$ sufficiently close to $c$, there exists a $t$ near $t_0$ such that $d \in \mathcal{F}_t$. Let $I$ and $J$ represent the indicatrix and figuratrix at~$c$, and let $u \in I$ be the Finsler unit vector along~$xc$.

We claim that, at the point $c$, ${\rm d}f(x,\cdot) = D_u$. The wave front $\mathcal{F}_{t_0}$ is a level set of $f(x,\cdot)$. Hence ${\rm d}f(x,\cdot)$ annihilates the tangent space to $\mathcal{F}_{t_0}$. By the Huygens principle, $D_u$ is conormal to this plane, hence proportional to ${\rm d}f(x,\cdot)$. It also follows from the Huygens principle that ${\rm d}f(x,\cdot)(u) = 1$, which is true of $D_u$ by definition. Hence ${\rm d}f(x,\cdot) = D_u$.

\looseness=-1 Since the Finsler distance function is non-symmetric, we also need to understand ${\rm d}f(\cdot,z)$. Consider the oriented geodesic segment $yz$. Although $zy$ is not necessarily a geodesic with respect to the Finsler structure $L$, it is a geodesic with respect to the ``reverse'' Finsler structure on $M$ defined by the Lagrangian $\bar{L}(v) \coloneqq L(-v)$. For the corresponding Finsler distance function~$\bar{f}$ we have the equality $\bar{f}(z,y) = f(y,z)$, and for the corresponding Legendre transform~$\bar{D}$ we have the equality $\bar{D}_{-v} = -D_v$ (by definition of the covector $D_v$). Therefore we have ${\rm d}f(\cdot,z) = {\rm d}\bar{f}(z,\cdot) = \bar{D}_{-v} = -D_v$, where the middle equality follows from the paragraph above. This completes the proof.
\end{proof}

\subsection{Magnetic billiards as Finsler billiards}\label{magnetic}

A popular billiard model that has been extensively studied in the last decades are magnetic billiards \cite{Berglund,Bialy,GutkinB,Robnik,Tabachnikov2004,Tasnadi,Zharnitsky}.

On a Riemannian surface, a magnetic field is given by a function $B(x)$, and the motion of a~charged particle is described by the differential equation $\ddot x = B(x) J \dot x$, where $J$ is the rotation of the tangent plane by $\pi/2$. This equation implies that the speed of the particle remains constant (the Lorentz force is perpendicular to the direction of motion). In the Euclidean plane, if the magnetic field is constant, the trajectories are circles of a fixed radius (Larmor circles).

In general, a magnetic field on a Riemannian manifold~$M$ is a closed differential 2-form $\beta$, and the magnetic flow is the Hamiltonian flow of the usual Hamiltonian $|p|^2/2$ on the cotangent bundle $T^* M$ with respect to the twisted symplectic structure $\omega + \pi^*(\beta)$, where $\omega={\rm d}p\wedge {\rm d}q$ is the standard symplectic form and $\pi\colon T^*M \longrightarrow M$ is the projection.

Magnetic billiard describes the motion of a charged particle confined to a domain with elastically reflecting boundary. The reflection law is the same as for the usual billiards, with zero magnetic field: the angle of incidence equals the angle of reflection. Following~\cite{Tabachnikov2004}, one can interpret magnetic billiards as Finsler ones.

Let us assume that the magnetic 2-form is exact: $\beta={\rm d}\alpha$ for some differential 1-form on a~Riemannian manifold $M^n$. Then the magnetic flow admits a Lagrangian formulation with the Lagrangian function{\samepage
\[
\bar L(x,v) = \tfrac{1}{2} |v|^2 + \alpha(x)(v),
\]
where $x \in M$, $v \in T_x M$, and $|v|$ is the Riemannian norm of the tangent vector~$v$.}

We want to consider the motion with unit speed, that is, to fix an energy level. Following the Maupertuis principle, replace the Lagrangian $\bar L$ with
\begin{equation} \label{magLagr}
L(x,v) = |v| + \alpha(x)(v).
\end{equation}
Assume that the magnetic field is weak enough so that $L(x,v) >0$ for all non-zero tangent vectors $v$ and all points $x$, that is, we assume that $|\alpha(x)| < 1$ everywhere. Then formula (\ref{magLagr}) defines a non-symmetric Finsler metric.

\begin{Lemma}[magnetic billiard reflection law]\label{magrefllaw}
The indicatrix of the magnetic Finsler metric~\eqref{magLagr} is an ellipsoid of revolution with a focus at the origin, and the Finsler billiard reflection law coincides with the usual one: the angle of incidence equals the angle of reflection.
\end{Lemma}

\begin{proof}Fix a point $x$ and consider the tangent space at this point. The tangent space is Euclidean, and the indicatrix $I$ is given by the equation $|v| + \alpha (v)=1$. Let $e$ be the tangent vector dual to the covector $\alpha$, that is, $\alpha (v) = e\cdot v$. We can choose an orthonormal basis so that $e=(t,0,\ldots,0)$ with $t<1$.

The equation of the indicatrix is $L(x,v)=1$, or $x_1^2+\dots + x_n^2 = (1-tx_1)^2$, that is,
\[
\big(1-t^2\big)^2 \left(x_1 + \frac{t}{1-t^2} \right)^2 + \big(1-t^2\big) \big(x_2^2+\dots+x_n^2\big) = 1.
\]
This is the equation of an ellipsoid of revolution obtain by revolving the ellipse
\[
\frac{(x_1+c)^2}{a^2} + \frac{x_2^2}{b^2} =1,
\]
where
\[
a^2=\frac{1}{(1-t^2)^2},\qquad b^2 = \frac{1}{(1-t^2)},\qquad c=\frac{t}{1-t^2},
\]
and hence $c^2 = a^2 - b^2$. Therefore this ellipse has a focus at the origin.

\begin{figure}[ht!]\centering
\includegraphics[width=2.3in]{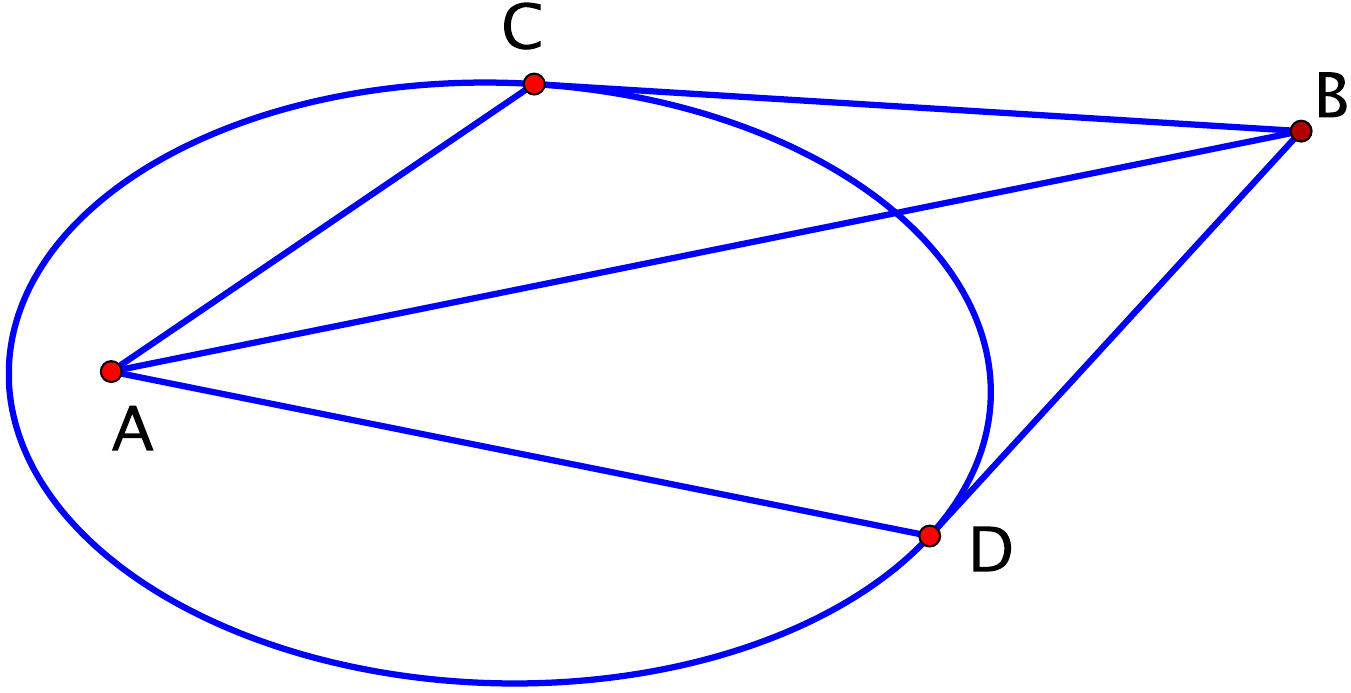}
\caption{Point $A$ is a focus of an ellipse; then $\angle BAC = \angle BAD$.}\label{ellipse}
\end{figure}

The second statement of the lemma reduces to a known geometric property of conics depicted in Fig.~\ref{ellipse}, see, e.g., \cite{Akopyan}. Compare with the Finsler reflection law, Fig.~\ref{reflfig}.
\end{proof}

We assume that the convexity conditions of Section~\ref{BilFin} hold for magnetic billiards. This implies that the magnetic field is weak enough. For example, if the billiard table is a planar domain bounded by a smooth strictly convex curve and the magnetic field is constant, then the Larmor radius is greater than the greatest radius of curvature of the boundary curve. This is one of the three regimes in magnetic billiards described in~\cite{Robnik}, in which ``motion is qualitatively similar to the field-free case'', albeit not time-reversible.

\subsection{Morse-theoretic approach to Finsler billiards}
\label{sec:morse}

We now discuss the necessary Finsler geometry and topology to apply results from Morse and Lusternik--Schnirelmann theories to the problem of periodic Finsler billiard trajectories. Similar preparation work is easier to do in the Euclidean case; it was the content of \cite[Section~4]{Farber-Tabachnikov2002-01}.

The \emph{ordered cyclic configuration space} of $r$ consecutively distinct points on $M$ is the space
\[
\GG(M,r)\coloneqq \big\{(x_1,\ldots,x_r)\in M^{\times r}\colon x_i\neq x_{i+1}\ \text{for all}\ i\big\},
\]
where by convention $x_{r+1}=x_1$. As mentioned in the Introduction, we consider the \emph{length function}
\[
\Lambda \colon \ \GG(M,r) \longrightarrow \R, \qquad \Lambda(x_1,\dots,x_r) = f(x_1,x_2) + f(x_2,x_3) + \cdots + f(x_r,x_1).
\]
The function $\Lambda$ is smooth and $\Z_r$-equivariant, but in contrast with Euclidean billiards, $\Lambda$ need not be $D_{r}$-equivariant. By definition of the Finsler billiard reflection for geodesic rays (Section~\ref{sec:reflectionlaw}), the $r$-periodic Finsler billiard orbits are precisely the critical points of $\Lambda$.

We would like to use this fact by applying Morse and Lusternik--Schnirelmann theories to obtain a lower bound for the number of periodic Finsler billiard orbits; however, these theories cannot be applied directly because $\GG(M,r)$ is not a closed manifold. We will show that for sufficiently small $\varepsilon$ we can replace $\GG(M,r)$, without affecting the topology, by the following compact manifold with boundary:
\[
\GG_\varepsilon(M,r) := \bigg\{ (x_1,\dots,x_r) \in M^{\times r} \, \bigg| \, \prod_{i=1}^r f(x_i,x_{i+1}) \geq \varepsilon > 0 \bigg\},
\]
where again the indices are understood cyclically. Similar to the Euclidean case \cite[Proposition~4.1]{Farber-Tabachnikov2002-01}, we establish the following proposition in the Finsler setup.

\begin{Proposition}
\label{prop:main}
If $\varepsilon > 0$ is sufficiently small, then
\begin{enumerate}\itemsep=0pt
\item[\rm (1)] $\GG_\varepsilon(M,r)$ is a smooth manifold with boundary;
\item[\rm (2)] the inclusion $\GG_\varepsilon(M,r) \subset \GG(M,r)$ is a $\Z_r$-equivariant homotopy equivalence;
\item[\rm (3)] all critical points of $\Lambda\colon\GG(M,r) \longrightarrow \R$ are contained in the interior of $\GG_\varepsilon(M,r)$;
\item[\rm (4)] at every critical point of $\Lambda \big|_{\partial \GG_\varepsilon(M,r)}$, the differential ${\rm d}\Lambda$ is positive on inward vectors.
\end{enumerate}
\end{Proposition}

This proposition makes it possible to apply $\Z_r$-equivariant Morse and Lusternik--{S}chnirel\-mann theories to the length function $\Lambda$ on the manifold with boundary $\GG_\varepsilon(M,r)$. We shall restrict ourselves to the case when $r$ is prime. Then the group $\Z_r$ acts freely on $\GG_\varepsilon(M,r)$, and $\Z_r$-equivariant Morse theory reduces to Morse theory on the quotient manifold $\GG_\varepsilon(M,r)/\Z_r$.

Due to item (4) of Proposition~\ref{prop:main}, the topological lower bounds on the number of critical points of $\Lambda$, such as the sum of Betti numbers or the Lusternik--{S}chnirelmann category, come from the topology of $\GG_\varepsilon(M,r)/\Z_r$. This space has the same topology as the cyclic configuration space $\GG\big(S^{d-1},r\big)/\Z_r$, due to item (2), and the fact that~$M$ is topologically the sphere. We refer to~\cite{Laudenbach} for the Morse theory on manifolds with boundary.

\subsection{Technical bounds for Finsler billiards}
\label{techbounds}
We start with a number of technical lemmas working toward the proof of Proposition \ref{prop:main}.

The boundary $\partial \GG_\varepsilon(M,r)$ is the level set (at $\varepsilon^2$) of the smooth function
\[
F \colon \ M^{\times r} \longrightarrow \R,  \qquad F(x_1,\dots,x_r) = \prod_{i=1}^r f(x_i,x_{i+1})^2,
\]
and $F^{-1}(0) = M^{\times r} \setminus \GG(M,r)$ is a critical level. The first two items are therefore a consequence of the following lemma.

\begin{Lemma}
\label{lem:delta}
There exists a constant $\delta > 0$ such that the interval $(0,\delta)$ consists of regular values of $F$.
\end{Lemma}

We offer some geometric intuition for this statement. As indicated above, the critical points of the length function $\Lambda$ are precisely the periodic Finsler billiard orbits. Similarly, we may think of the critical points of the function $F$ as the periodic orbits of some ``unusual'' billiard trajectory, which we will call the $F$-\emph{billiard} trajectory, and for which the reflection law is given by Lemma \ref{lem:reflectionlaw2} below. In this terminology, Lemma \ref{lem:delta} claims the existence of $\delta$ such that any $r$-periodic $F$-billiard orbit $(x_1,\dots,x_r)$ satisfies $F(x_1,\dots,x_r) \geq \delta$.

Assume that no such $\delta$ exists. Then we can find a closed $F$-billiard orbit such that one of the edge lengths $\ell_i \coloneqq f(x_{i-1},x_i)$ is arbitrarily small. The contradiction will arise in Section~\ref{sec:lemdeltaproof}, where we show the following two statements:
\begin{enumerate}\itemsep=0pt
\item[\rm (1)] If one edge of a closed $F$-orbit is ``arbitrarily short'', then \emph{all} of the edges are ``arbitrarily short''.
\item[\rm (2)] A closed $F$-orbit cannot have all edges ``arbitrarily short''.
\end{enumerate}

We will not explicitly define \emph{arbitrarily short}, but a suitable quantity could be determined in terms of the period $r$ and certain ``curvature'' quantities, which depend on the Finsler geo\-metry of~$M$ and on the geometry of the indicatrix bundle. A similar shortness statement was proven in the Euclidean case \cite{Farber-Tabachnikov2002-01}; however, those curvature estimates rely on symmetry of the inner product, symmetry of the unit tangent spheres, and also some trigonometry; none of which we have at our disposal in the Finsler setting. We treat these subtleties in the following Sections~\ref{sec:curvature}--\ref{sec:indicatrixbound}.

\subsubsection{A ``curvature'' bound for Finsler manifolds}\label{sec:curvature}

In the Finsler setting, we would like to formalize the intuitive idea that for $y \in M$, ``a geodesic segment $yz$ is short if and only if the corresponding tangent vector at $y$ is almost tangent to~$M$''.

This is easy to establish in the Euclidean case \cite{Farber-Tabachnikov2002-01} as follows. The boundary $M$ of the billiard table is a strictly convex hypersurface in Euclidean space~$\R^d$. By strict convexity of $M$, there exist positive numbers $\rho<R$ such that for every $y \in M$, there exist two spheres tangent to $M$ at $y$, of radii $\rho$ and $R$, such that the sphere of radius $R$ contains $M$, and the sphere of radius $\rho$ is contained in $M$. Let $n$ be the outward unit normal at $y$ and let $v \in T_y\R^d$ be a unit tangent vector with $\langle v, n \rangle < 0$, and follow the geodesic from $y$ in the direction of $v$ until colliding again with the boundary, say at $z \in M$. Then the numbers $\rho$ and $R$ satisfy
\[
\rho < \frac{|y-z|}{-2\langle v, n \rangle} < R.
\]
In particular, \emph{the measurements} $g(v) \coloneqq |y-z|$ \emph{and} $-p(v) = -\langle v, n \rangle$ \emph{are of the same order}. We will use the notation $g \sim p$ for such a statement.

Now let $U$ be a Finsler manifold and $M$ a smooth, closed hypersurface, quadratically convex with respect to the Finsler geodesics. For $y \in M$, let $p \in J$ represent the unit covector which is conormal to $M$ at $y$ and positive on outward vectors. Given $v \in I \subset T_yU$ such that $p(v) < 0$, let $z \in M$ be the first collision with $M$ of the geodesic ray emanating from $y$ in the direction $v$. Define $g(v) = f(y,z)$.

Similarly, given $u \in I \subset T_yU$ such that $p(u) > 0$, let $x \in M$ be the point such that the oriented geodesic ray $xy$ enters $y$ with direction $u$. Define $g(u) = f(x,y)$.

\begin{Remark}To verify that such a point $x$ exists, note that the oriented ray $xy$ is a geodesic if and only if the oriented ray $yx$ is a geodesic with respect to the ``reverse'' Finsler structure defined by the Lagrangian $\bar{L}(u) \coloneqq L(-u)$. The point $x$ can be defined as the first collision with~$M$ of the reverse geodesic ray emanating from~$y$ in the direction $-u$.
\end{Remark}

\begin{Lemma}
\label{lem:finslerbound}
There exist positive constants $\rho < R$ such that, for all $y \in M$, all $v \in I \subset T_yU$ with $p(v) < 0$, and all $u \in I \subset T_yU$ with $p(u) > 0$, we have
\[
\rho < \frac{f(y,z)}{-p(v)} < R \ \ \ \ \ \mbox{ and } \ \ \ \ \ \rho < \frac{f(x,y)}{p(u)} < R.
\]
Equivalently, the functions $g$ and $p$ have the same order, $g \sim p$.
\end{Lemma}

The following proof is due to Sergei Ivanov.

\begin{proof}
We will show the left inequalities, corresponding to vectors $v$ emanating from $y$. The right inequalities follow similarly.

By compactness of $M$, it suffices to show existence for a single point $y \in M$. In fact, it is enough to show that the lemma holds near $y$: that is, for $v \in I \subset T_yU$ with $p(v)$ near $0$. This is due to the following fact: for all $v$ such that $p(v)$ is sufficiently away from $0$, the quantity $\frac{f(y,z)}{-p(v)}$ is bounded away from zero. Indeed, as $f(y,z)$ tends to zero, that is, point $z$ tends to point $y$, the vector along the geodesic $yz$ tends to a tangent vector to $M$ at $y$, that is, $p(v)$ also tends to zero.

Consider smooth coordinates $(y_1,\dots,y_d)$ near $y$, such that a neighborhood of $y$ in $M$ is given by the equation $y_d = 0$. Let $\gamma(t) = (\gamma_1(t), \dots, \gamma_d(t))$ be the unit-speed geodesic passing through~$y$ at time $0$ and with unit tangent vector $v \in I$ (see Fig.~\ref{fig:gamma}, left).

\begin{figure}[ht!]\centering
\includegraphics[width=2.8in]{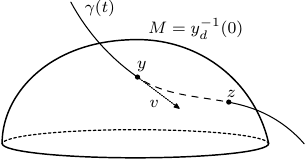} \qquad \includegraphics[width=2.8in]{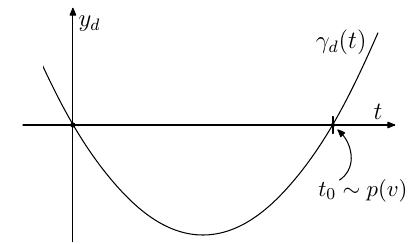}
\caption{(Left): The unit speed geodesic $\gamma(t) = (\gamma_1(t),\dots,\gamma_d(t))$ passes through $y \in M$ at time $0$ with tangent vector $v$ and again at $z \in M$ at time $t_0 \sim p(v)$. (Right): The $d^{\mbox{\scriptsize th}}$ component of $\gamma$, shown to the second order.}\label{fig:gamma}
\end{figure}

In case $v \in I \cap T_yM$, we have $\gamma_d(0) = 0$ and $\gamma_d^\prime(0) = 0$, so by quadratic convexity, $\gamma_d^{\prime\prime}(0)$ is bounded away from $0$ for all $v \in I \cap T_yM$. Therefore, for $v$ almost tangent to $M$ (i.e., with $p(v) < 0$ sufficiently near $0$), we have $\gamma_d^{\prime\prime}(0) \sim 1$. In addition, for $v$ almost tangent to $M$, we have $p(v) \sim \gamma_d^\prime(0) < 0$.

Now writing $\gamma_d(t)$ up to second order yields
\[
\gamma_d(t) = \gamma_d(0) + \gamma_d^\prime(0)t + \tfrac12 \gamma_d^{\prime\prime}(0)t^2,
\]
as depicted in Fig.~\ref{fig:gamma}, right. Thus $\gamma_d(t)$ will meet zero again at a time
\[
t_0 = -2 \frac{\gamma_d^\prime(0)}{\gamma_d^{\prime\prime}(0)} \sim p(v).
\]
Since $t_0$ is the length $f(y,z) = g(v)$, we conclude that $g \sim p$, completing the proof.
\end{proof}

\subsubsection[Reflection law for the $F$-billiard]{Reflection law for the $\boldsymbol{F}$-billiard}
\label{sec:reflectionF}
Just as the critical points of the length function $\Lambda$ correspond to periodic orbits of the Finsler billiard, we think of the critical points of~$F$ as the periodic orbits of some ``unusual'' billiard trajectory determined by the function~$F$. In fact, what we are really interested in are the critical points of~$F$; we use this billiard terminology because it is intuitive and suggestive.

So suppose that $x,y,z \in M$ are points such that the oriented geodesic segment $yz$ is the $F$-reflection of the oriented geodesic segment $xy$. Let $u, v \in I \subset T_yU$ be the tangent vectors which correspond, respectively, to $xy$ and~$yz$. Let $p \in J \subset T^*_yU$ represent the outward-pointing unit conormal, and let $n = D^*_p$ represent the outward pointing unit normal. Then the $F$-billiard has the following reflection law.

\begin{Lemma}\label{lem:reflectionlaw2}
The $F$-billiard reflection law is given by the following cotangent relation:
\[
\frac{D_u}{f(x,y)} - \frac{D_v}{f(y,z)} = tp, \qquad t>0.
\]
\end{Lemma}

\begin{proof}
The point $y \in M$ is a relative extremum of the function $F(\cdot) \coloneqq f(x,\cdot)^2f(\cdot,z)^2$, so the differential of this function at $y$ is conormal to the tangent hyperplane $T_yM$. Therefore the differential of the function
\[
\tfrac{1}{2}\ln F(\cdot) = \ln f(x,\cdot) + \ln f(\cdot, z)
\]
at $y$ is also conormal to the tangent hyperplane $T_yM$. We compute
\[
\tfrac{1}{2}{\rm d} \big(\ln F(\cdot)\big) = {\rm d}\big(\ln f(x,\cdot) + \ln f(\cdot,z)\big) = \frac{{\rm d}f(x,\cdot)}{f(x,\cdot)} + \frac{{\rm d}f(\cdot,z)}{f(\cdot,z)}.
\]
Recall from the proof of Lemma \ref{lem:cotangentlaw} that at the point $y$, ${\rm d}f(x,\cdot) = D_u$ and ${\rm d}f(\cdot,z) = -D_v$. Therefore, at the point $y$, we have
\[
\frac{D_u}{f(x,y)} - \frac{D_v}{f(y,z)} = tp,
\]
as desired.
\end{proof}

We will also make use of the following consequence.

\begin{Lemma}\label{lem:tangentlaw}If $v \in I \subset T_yM$ is the $F$-reflection of $u \in I \subset T_yM$, then either the linear subspaces $T_uI, T_vI, T_yM \subset T_yU$ are equal, or they intersect in a subspace of codimension $2$.
\end{Lemma}

\begin{proof}
First of all, we clarify the statement. The indicatrix $I$ is a hypersurface in the vector space $T_yU$, hence its tangent spaces can be considered as vector subspaces of $T_yU$. It is in this sense that we intersect them, and $T_yM$, in the ambient space $T_yU$. This remark applies to other similar arguments elsewhere in the paper.

Now, by the reflection law, the covectors $D_u$, $D_v$, and $p$ are linearly dependent, hence span a~subspace of at most two dimensions. Therefore, the intersection of their kernels, $T_uI \cap T_vI \cap T_yM$, has codimension at most two.
\end{proof}

Given $u \in I \subset T_yU$ with $p(u) > 0$, let $v$ be the $F$-reflection of $u$. Suppose that $u \neq n$, so that the hyperplanes $T_uI$, $T_vI$, and $T_yM$ are not parallel. Then, from Lemma \ref{lem:tangentlaw}, the intersection of the linear hyperplanes $T_uI \cap T_vI$ is a codimension $1$ subspace of $T_yM$, hence there is a unique vector $w = w(u) \in T_yM \cap I$ for which the kernel of~$D_w$ contains $T_uI \cap T_vI$ and such that $D_u(w) > 0$. Now applying the (cotangent) relation of Lemma~\ref{lem:reflectionlaw2} to the vector~$w$ yields:
\begin{gather}\label{eqn:tangentreflection}
\frac{D_u(w)}{f(x,y)} = \frac{D_v(w)}{f(y,z)}.
\end{gather}

Note that $D_v(w) > 0$, since the denominators are positive and $D_u(w) > 0$ by assumption.

\subsubsection{Global indicatrix bound}
\label{sec:indicatrixbound}

Since the Finsler norm need not arise from an inner product, it is useful to develop some method of comparing two vectors $u, v \in I$. One non-symmetric idea is to apply the Legendre transform to obtain a covector $D_u$ which then acts on $v$. We have a general lower bound by compactness and a specific upper bound by strict convexity of the indicatrices.

\begin{Lemma}
\label{lem:upperbound}
There exists a positive constant $K \geq 1$ such that, for every triple $y \in M$, $u,v \in I \subset T_yU$, one has $-K \leq D_u(v) \leq 1$. In particular, $D_u(v) = 1$ if and only if $u = v$.
\end{Lemma}

We will also require a more refined bound. For $y \in M$, consider $u \in I \subset T_yU$ such that $p(u) > 0$ and $u \neq n$. Let $v \in I \subset T_yU$ be the $F$-reflection of $u$ and let $w$ be as described in Section \ref{sec:reflectionF}. Although $w = w(u)$ is not defined when $u = n$, any sequence $u_i \longrightarrow n$ has $D_{u_i}(w_i) \longrightarrow 0$ and $D_{v_i}(w_i) \longrightarrow 0$, so that the maps $u \longmapsto D_u(w)$ and $u \longmapsto D_v(w)$ extend continuously to $u = n$.

\begin{Lemma}\label{lem:lowerbound}There exists a positive constant $k<1$ such that for all $y \in M$ and all $u \in I \subset T_yU$ with $p(u) > 0$,
\[
k < D_u(w) + p(u) \qquad \text{and} \qquad k < D_v(w) - p(v).
\]
\end{Lemma}

\begin{proof}The numbers $D_u(w)$, $D_v(w)$, $p(u)$, and $-p(v)$ are all positive. The first two are bounded away from $0$ except when $u$ is almost parallel to~$n$, and the last two are bounded away from~$0$ except when $u$ is almost tangent to~$M$.
\end{proof}

\subsection[The $F$-billiard reflection preserves shortness and almost-tangency]{The $\boldsymbol{F}$-billiard reflection preserves shortness and almost-tangency}

With all of the technical bounds obtained in Section~\ref{sec:morse}, we are ready to study the $F$-billiard reflection in more detail. We show that short geodesics $F$-reflect to short geodesics, and that almost-tangent vectors $F$-reflect to almost-tangent vectors.

In particular, suppose that $u \in I \subset T_yU$, let $v \in I \subset T_yU$ be the $F$-reflection of $u$, and let~$x$ and~$z$ be points in~$M$ such that $u$ and $v$ correspond, respectively, to the oriented geodesic segments $xy$ and $yz$. In this sense, we may consider $v$, $x$, and $z$ as functions of $u$. Let $p \in J \subset T_y^*U$ be the outward pointing unit conormal and let $n = D^*_{p}$ be the outward pointing unit normal. Our goal is to show the following:
\[
p(u) \sim f(x,y) \sim f(y,z) \sim p(v).
\]
Note that the outer equivalences were already established in Lemma \ref{lem:finslerbound}.

We make use of the positive constants $\rho$ and $R$ defined by Lemma \ref{lem:finslerbound}, as well as the positive constants $K$ and $k$ determined in Lemmas \ref{lem:upperbound} and \ref{lem:lowerbound}.

\begin{Lemma}
\label{lem:lengthequiv}
If $v \in I \subset T_yU$ is the $F$-reflection of $u \in I \subset T_yU$, with $p(u) > 0$, then
\[
\frac{k\rho}{K(\rho+R)} \leq \frac{f(x,y)}{f(y,z)} \leq \frac{K(\rho+R)}{k\rho}.
\]
In particular, the measurements $f(x,y)$ and $f(y,z)$ are equivalent: $f(x,y) \sim f(y,z)$.
\end{Lemma}

\begin{proof}
We prove by contradiction. Assume that the left inequality fails, so that
\begin{align}
\label{eqn:leftfail}
\frac{f(x,y)}{f(y,z)} < \frac{k\rho}{K(\rho+R)}.
\end{align}
Then the $F$-reflection law applied to $w$ (\ref{eqn:tangentreflection}), combined with (\ref{eqn:leftfail}) and then Lemma \ref{lem:upperbound} twice, yields
\begin{align}
\label{eqn:estimate1}
D_u(w) = \frac{f(x,y)}{f(y,z)}D_v(w) < \frac{k\rho}{K(\rho+R)}D_v(w) \leq \frac{k\rho}{K(\rho+R)} \leq \frac{k\rho}{\rho+R}.
\end{align}
On the other hand, Lemma \ref{lem:finslerbound}, combined with (\ref{eqn:leftfail}) and then Lemma \ref{lem:upperbound}, yields
\begin{align}
\label{eqn:estimate2}
p(u) < - \frac{kR}{K(\rho+R)} p(v) < \frac{kR}{\rho+R}.
\end{align}

But the sum of (\ref{eqn:estimate1}) and (\ref{eqn:estimate2}) gives a contradiction; the right side is equal to $k$, while the left side is greater than $k$ by Lemma \ref{lem:lowerbound}.

The right inequality of Lemma \ref{lem:lengthequiv} can be shown similarly.
\end{proof}

\begin{Lemma}
\label{lem:context}
The $F$-billiard reflection $u \longmapsto v$ can be extended continuously to $I \cap T_yM$. Moreover, this extension is the identity map on $I \cap T_yM$.
\end{Lemma}

\begin{proof}
This lemma clearly holds for the ordinary Finsler billiard reflection law in Lemma \ref{lem:cotangentlaw}. In the $F$-billiard case, by Lemmas \ref{lem:finslerbound} and \ref{lem:lengthequiv}, we have $p(u) \sim f(x,y) \sim f(y,z) \sim p(v)$, so if a~sequence $u_n \subset I$ tends to $T_yM$, so does the sequence of $F$-reflections $v_n$.

From Lemma~\ref{lem:tangentlaw}, we have $\operatorname{Ker} D_{u_n} \cap T_yM = \operatorname{Ker} D_{v_n} \cap T_yM$ for every $n$. Therefore, if $u_n \longrightarrow u \in I \cap T_yM$, then $\lim \big(\operatorname{Ker} D_{u_n} \big) \cap T_yM$ is some codimension~1 subspace $W \subset T_yM$, and we must have $W = \operatorname{Ker} D_v \cap T_yM$ for any continuously-defined $F$-reflection $v$ of $u$. Since we also must have $v \in I \cap T_yM$, there are only two candidates for $v$: $u$ itself and the ``opposite'' vector~$u'$, for which $D_u(u') < 0$. Now, by definition of the unit vector $w_n \in T_yM$ (see Section~\ref{sec:reflectionF}), $u = \lim w_n$. We have $D_{v_n}(w_n) > 0$ for all $n$, so $D_v(u)$ must be nonnegative. Hence $v = u$.
\end{proof}

We are now ready to prove Lemma \ref{lem:delta}.

\subsection[Nonexistence of short periodic $F$-billiard trajectories]{Nonexistence of short periodic $\boldsymbol{F}$-billiard trajectories} \label{sec:lemdeltaproof}

\begin{proof}[Proof of Lemma \ref{lem:delta}]
Assume that $(x_1,\dots,x_r)$ is an $r$-periodic $F$-billiard orbit. Let $u_i, v_i \in I \subset T_{x_i}U$ represent the tangent vectors which correspond, respectively, to the oriented geodesic segments $x_{i-1}x_i$ and $x_ix_{i+1}$. Let $p_i \in J \subset T_{x_i}^*U$ be the outward pointing unit conormal and let $n_i = D^*_{p_i}$ be the outward pointing unit normal. Let $\ell_i = f(x_{i-1},x_i)$. We aim to show that no $\ell_i$ can be arbitrarily small. From Lemma~\ref{lem:lengthequiv} we obtain the edge comparison for any~$i$ and~$j$:
\[
\frac{\ell_i}{\ell_j} < \left( \frac{K(\rho+R)}{k\rho} \right)^r.
\]
In particular, this establishes that $\ell_i \sim \ell_j$, so that if $\ell_i$ is small for any $i$, every~$\ell_j$ is also small.

Now, seeking a contradiction, we assume that a critical $r$-gon $(x_1,\dots,x_r)$ has all edges arbitrarily short. Then each~$u_i$ is almost tangent, so by Lemma~\ref{lem:context}, $u_i \sim v_i$ in any auxiliary metric on~$M$. In particular, for any auxiliary Euclidean metric on $M$, $(x_1,\dots,x_r)$ is an $r$-gon with all exterior angles small. The contradiction will arise once we show the following statement which is a discrete analog of the theorem that the total curvature of a spacial closed curve is not less that $2\pi$ (see, e.g.,~\cite{Fenchel}):

\emph{The sum of the exterior angles of an $r$-gon in Euclidean space $\R^q$ is at least~$2\pi$.}

Let $a_1,\dots,a_r$ be oriented vectors representing the edges of the polygon. The Gauss map sends these vectors to some points $A_1,\dots,A_r$ on $S^{q-1}$, and the great circle segment connecting~$A_i$ and~$A_{i+1}$ has length equal to the exterior angle at the vertex connecting $a_i$ and $a_{i+1}$ (here the indices are cyclic). Thus it is enough to show that the perimeter of this spherical polygon $(A_1,\dots,A_r)$ is at least~$2\pi$. By the Crofton formula, for this it is enough to show that the polygon intersects every great sphere~$S^{q-2}$.

Choose a great sphere $S = S^{q-2} \subset S^{q-1}$ and let $P = \R^{q-1} \subset \R^{q}$ be the corresponding hyperplane. If some vector $a_i$ is contained in $P$, then $S$ contains $A_i$. Otherwise, translate $P \subset \R^q$ so that it sits as a supporting hyperplane to the polygon $(a_1,\dots,a_r)$, say at the vertex connecting edge $a_i$ to edge $a_{i+1}$. In this case the points $A_i$ and $A_{i+1}$ are separated by the great sphere $S$, so the great circle segment connecting $A_i$ to $A_{i+1}$ must intersect~$S$.
\end{proof}

\subsection{Critical points of the restricted length function}

With the proof of Lemma \ref{lem:delta} complete, we turn our attention to the fourth item of Proposition~\ref{prop:main}, that if some $r$-gon $x = (x_1,\dots,x_r) \in \partial \GG_\varepsilon(M,r)$ is a critical point of $\Lambda \big|_{\partial \GG_\varepsilon(M,r)}$, then ${\rm d}\Lambda_x$ is positive on inward vectors. Since $\partial \GG_\varepsilon(M,r)$ is a level set of $F$, the differential ${\rm d}F$ vanishes precisely on vectors tangent to $\partial \GG_\varepsilon(M,r)$. Then, since ${\rm d}F_x = t\,{\rm d} \Lambda_x$ on $T_x\GG_\varepsilon(M,r)$, the sign of ${\rm d}\Lambda_x$ is the same for all inward pointing tangent vectors at point~$x$. Therefore, it is enough to show that ${\rm d}\Lambda_x$ is positive on a single inward vector $V \in T_x\GG_\varepsilon(M,r)$.

For the remainder of this section we let $x = (x_1,\dots,x_r)$ be a critical point as discussed above, and we let $u_i, v_i \in T_{x_i}U$ be unit vectors tangent to the geodesic rays $x_{i-1}x_i$ and $x_ix_{i+1}$. We will drop the subscript on the differentials ${\rm d}\Lambda$ and ${\rm d}F$, since we will only be discussing them at the point $x$. We will continue to use the notation $n_i$ for the outward-pointing unit normal vector at~$x_i$ and $p_i = D_{n_i}$. We will use $\ell_i$ to represent the Finsler distance $f(x_{i-1},x_i)$.

We recall the differentials:
\[
{\rm d}\Lambda = (D_{u_1} - D_{v_1}, \dots, D_{u_r} - D_{v_r}),
\]
and
\[
\tfrac12 {\rm d}(\ln F) = \left( \frac{D_{u_1}}{\ell_1} - \frac{D_{v_1}}{\ell_2}, \dots, \frac{D_{u_r}}{\ell_r} - \frac{D_{v_r}}{\ell_1} \right).
\]

To show that there exists $V \in T_x\GG_\varepsilon(M,r)$ for which both differentials are positive, it is enough to find a vector $\nu \in T_{x_i}M$, for some $i$, such that $D_{u_i}(\nu) > 0$ and $D_{v_i}(\nu) < 0$. Indeed, we can then let $V \in T_x\GG_\varepsilon(M,r)$ be the vector whose only nonzero component is $\nu$.

The structure of this section is to assume that no such $\nu$ exists and analyze the consequences. The following lemma is the first step of this process.

\begin{Lemma}
\label{lem:samesign}
Suppose that for some fixed $i$, there is no $\nu \in T_{x_i}M$ satisfying $D_{u_i}(\nu) > 0$ and $D_{v_i}(\nu) < 0$. Then there exist $a_i$ and $b_i$, both positive or both negative, such that
\begin{equation}
\label{eqn:abrelation}
a_iD_{u_i} - b_iD_{v_i} = p_i.
\end{equation}
\end{Lemma}

\begin{proof}
The hypothesis implies that $\operatorname{ker}(D_{u_i}) \cap T_{x_i}M = \operatorname{ker}(D_{v_i}) \cap T_{x_i}M$, hence the covectors~$D_u$ and~$D_v$ are positive-proportional when restricted to $T_{x_i}M$. That is, there exist $a_i$ and $b_i$, both positive or both negative, such that either
\begin{equation*}
a_iD_{u_i} - b_iD_{v_i} = 0 \qquad {\rm or}\qquad
a_iD_{u_i} - b_iD_{v_i} = p_i.
\end{equation*}
The former is impossible since it implies that $u_i = v_i$.
\end{proof}

It follows from Lemma \ref{lem:samesign} that the three covectors $D_{u_i}$, $D_{v_i}$, and $p_i$ span a $2$-plane $P$, hence there exists a unique vector $w_i \in T_{x_i}M \cap I$ such that $D_{w_i} \in P$ and $D_{u_i}(w_i) > 0$. In the next lemma we develop a universal constant, which allows us to compare $D_{u_i}$ with $p_i(u_i)$ and $D_{v_i}$ with $p_i(v_i)$, for any $u_i$ and $v_i$ satisfying a reflection law as in (\ref{eqn:abrelation}). We observe the similarities in (\ref{eqn:abrelation}) to both the Finsler reflection law and the $F$-reflection law, and we note that $w_i$ is an analogue of the vector $w$ introduced after Lemma \ref{lem:tangentlaw} for the $F$-billiard reflection.

\begin{Lemma}
\label{lem:trig}
There exist constants $C>0$ and $m \in \mathbb{N}$, such that, for all $x \in M$, and for all $w \in T_xM \cap I$, if $P = \operatorname{Span}\left\{p,D_w\right\}$, and if $u \in I$ is any vector with $D_u \in P$ and $D_u(w) > 0$, then
\[
\frac{1}{C}p(u)^{2m} \leq 1 - D_u(w) \leq Cp(u)^{2m}.
\]
\end{Lemma}

\begin{proof}
By compactness of $M$ and $T_xM \cap I$ it is enough to show the existence of $C$ at a single point $x$ and for a single $2$-plane $P$. Let $\Gamma \subset I$ be the open curve consisting of those points $u$ described in the statement of the lemma. We claim that $p|_\Gamma \colon \Gamma \longrightarrow \R$ is injective. Indeed, suppose that $p(u) = p(v)$ for $u,v \in \Gamma$. We may write $aD_u - bD_v = p$; here $a$ and $b$ have the same sign, since $D_u(w)$ and $D_v(w)$ are both positive and $p(w)=0$. Applying this relation to $u$ and $v$ yields
\[
a - bD_v(u) = p(u) = p(v) = aD_u(v) - b,
\]
 therefore $a(1 - D_u(v)) = -b(1 - D_v(u))$, contradicting that $a$ and $b$ have the same sign.

Now consider the map $h \colon p(\Gamma) \longrightarrow \R$ given by $p(u) \longmapsto D_u(w)$. Then $h(0) = D_w(w) = 1$, and $h'(0) = 0$ since $D_u(w)$ is maximized at $u = w$. Therefore the first nonzero derivative of $h$ has even order and is negative; in particular, $1 - D_u(w) \sim p(u)^{2m}$ for some $m$.
\end{proof}

It follows from quadratic convexity of the indicatrix that $m=1$, but we omit the details here. We are most interested in the following form of Lemma \ref{lem:trig}.

\begin{Corollary}
\label{cor:trig}
For any $x \in M$ and any $u, v \in I \subset T_xU$ such that $p(u) > 0$, $p(v) < 0$, and $aD_u - bD_v = p$, where $a$ and $b$ have the same sign, the following inequality holds:
\[
\frac{p(v)^{2m}}{C^2p(u)^{2m}} \leq \frac{1-D_v(w)}{1-D_u(w)} \leq C^2\frac{p(v)^{2m}}{p(u)^{2m}},
\]
where $w \in T_xM$ is the unique unit vector with $D_w \in \operatorname{Span}\left\{D_u, D_v\right\}$ and $D_u(w) > 0$.
\end{Corollary}

We will show the existence of an appropriate vector $\nu$ in two separate cases: the first, when all side lengths of the $r$-gon are sufficiently short; and the second, when there is at least one long side. The following lemma treats the first case.

\begin{Lemma}
\label{lem:eta}
There exists $\eta > 0$ such that, if $p_i(u_i) < \eta$ and $-p_i(v_i) < \eta$ for all $i \in \left\{1, \dots, r\right\}$, then there exists an index $i$ and a vector $\nu \in T_{x_i}M$, such that $D_{u_i}(\nu) > 0$ and $D_{v_i}(\nu) < 0$.
\end{Lemma}

\begin{proof} Assume that no such $\nu$ exists, then equation (\ref{eqn:abrelation}), with $a_i$ and $b_i$ both positive or both negative, holds at every vertex $i$. We will use the ``all small'' hypothesis of the lemma to arrive at a contradiction. We first focus on a single vertex and drop the subscript $i$. Apply (\ref{eqn:abrelation}) to vectors $u$ and $v$ to obtain
\begin{gather}\label{eqn:ab}
a - bD_v(u)  = p(u), \\
aD_u(v) - b  = p(v).\label{eqn:ab2}
\end{gather}
Suppose that $\eta = \frac{\delta}{2+2K}$, where $\delta > 0$ is a small unspecified number and $K \geq 1$ is the constant determined in Lemma~\ref{lem:upperbound}. We claim that $D_u(v)$ and $D_v(u)$ cannot be negative.

First, if $a$ and $b$ are negative, then $D_u(v)$ and $D_v(u)$ are both positive because $p(u)>0$, $p(v)<0$. Otherwise, if $a$ and $b$ are positive, and also if $D_v(u)$ is negative, then{\samepage
\begin{gather*}
a  = p(u) + bD_v(u) < p(u), \\
b  = -p(v) + a D_u(v) < -p(v) + p(u)D_u(v) < -p(v) + p(u),
\end{gather*}
and so $a$ and $b$ are both less than $2\eta$. The same bounds hold if $D_u(v)$ is negative.}

Now apply (\ref{eqn:abrelation}) to the vector $n$ to obtain
\[
1 = aD_u(n) - bD_v(n) < a - bD_v(n) < 2\eta + bK < 2\eta(1 + K) = \delta,
\]
contradicting the assumption that $D_v(u)$ or $D_u(v)$ is negative. Now, solve for $a$ and $b$ using equations (\ref{eqn:ab}) and (\ref{eqn:ab2}) to obtain
\begin{gather*}
a  = \frac{p(u) - p(v)D_v(u)}{1 - D_u(v)D_v(u)} < \frac{2\eta}{1 - D_u(v)D_v(u)}, \\
b  = \frac{-p(v) + p(u)D_u(v)}{1 - D_u(v)D_v(u)} < \frac{2\eta}{1 - D_u(v)D_v(u)}.
\end{gather*}
Again we apply (\ref{eqn:abrelation}) to the vector $n$ to obtain
\[
1 = aD_u(n) - bD_v(n) < \frac{2\eta(1+K)}{1 - D_u(v)D_v(u)} = \frac{\delta}{1 - D_u(v)D_v(u)}.
\]
Therefore, $1 - D_u(v)D_v(u) = \delta$, and so both $D_u(v) \geq 1 - \delta$ and $D_v(u) \geq 1 - \delta$. It follows that $u$ and $v$ are sufficiently close.

The conclusion of the proof follows the lines of the discussion in Section~\ref{sec:lemdeltaproof} (the proof of Lemma \ref{lem:delta}). Namely, the above analysis holds at every vertex~$i$, and this contradicts the existence of a large exterior angle in an auxiliary Euclidean metric.
\end{proof}

We are now equipped to prove Proposition~\ref{prop:main}.

\subsection{The proof of Proposition \ref{prop:main}}

\begin{proof}[Proof of Proposition~\ref{prop:main}]
The first two items follow immediately from Lemma~\ref{lem:delta}. The argument for the third item is similar. In particular, we apply the same logic of Lemma~\ref{lem:delta} to the length function~$\Lambda$, instead of the function $F$, to obtain a similar statement: if one edge of an $r$-periodic Finsler billiard trajectory is short, then so are all its edges. Therefore there exists some $\varepsilon > 0$ such that all critical points of~$\Lambda$ are contained in the interior of $\GG_\varepsilon(M,r)$.

Let $\eta$ be a fixed number satisfying Lemma~\ref{lem:eta}, and let $R$ and $\rho$ be the constants from Lemma~\ref{lem:finslerbound}. To satisfy the fourth item of the proposition, choose $\varepsilon$ small enough so that \mbox{$\eta > \frac{A}{\rho}\varepsilon^{\frac{1}{2r}}$}, where $A > 0$ is a constant we will specify shortly. Assume that all lengths $\ell_i$ satisfy $\ell_i < \eta\rho$. Then, by Lemma~\ref{lem:finslerbound}, all corresponding $u_i$ and $v_i$ are $\eta$-tangent, so the conditions of Lemma \ref{lem:eta} are satisfied, confirming the existence of an appropriate vector $\nu$. Otherwise, there exists an index $j$ such that $\ell_j \geq \eta\rho > A\varepsilon^{\frac{1}{2r}}$. Using the fact that the product of the squared lengths is $\varepsilon$, we write
\[
\varepsilon^{\frac12} = \prod_{1 \leq i \leq r} \ell_i > A\varepsilon^{\frac{1}{2r}} \prod_{i \neq j} \ell_i.
\]
Therefore, there exists an index $k$ such that $\ell_k < \varepsilon^{\frac{1}{2r}} A^{-\frac{1}{r-1}}$. It follows that
\[
A^\frac{r}{r-1} < \frac{\ell_j}{\ell_k} = \frac{\ell_j}{\ell_{j-1}} \cdot \frac{\ell_{j-1}}{\ell_{j-2}} \cdots \frac{\ell_{k+1}}{\ell_k},
\]
where the indices are understood cyclically. There are at most $r-1$ factors on the right side, therefore, there exists some index $i$ such that
\[
A^\frac{r}{(r-1)^2} < \frac{\ell_{i+1}}{\ell_i}.
\]
Now let $A = \big(\frac{R}{\rho}\big)^{\frac{(r-1)^2}{r}} \cdot C^{\frac{(r-1)^2}{mr}}$, where $C$ and $m$ are the constants determined in Lemma \ref{lem:trig}. Then we have
\[
\frac{R}{\rho} \cdot C^{\frac{1}{m}} < \frac{\ell_{i+1}}{\ell_i} \leq -\frac{R}{\rho}\frac{p_i(v_i)}{p_i(u_i)},
\]
where the second inequality is Lemma \ref{lem:finslerbound}. It follows that
\begin{gather}\label{eqn:shortlong}
1 < \frac{p_i(v_i)^{2m}}{C^2p_i(u_i)^{2m}}.
\end{gather}
Assume there is no vector $\nu \in T_{x_i}M$ such that $D_{u_i}(\nu) >0$ and $D_{v_i}(\nu) < 0$, since otherwise, we are done. Then the hypotheses of Corollary~\ref{cor:trig} hold, and we write
\[
\frac{1 - D_{v_i}(w)}{1 - D_{u_i}(w)} \geq \frac{p_i(v_i)^{2m}}{C^2p_i(u_i)^{2m}} > 1,
\]
where the second inequality is~(\ref{eqn:shortlong}). Therefore $D_{u_i}(w) > D_{v_i}(w)$, and since $\ell_{i+1} > \ell_i$, we also have $D_{u_i}(w)\ell_{i+1} > D_{v_i}(w)\ell_i$. Thus the vector $V \in T_x\GG_\varepsilon(M,r)$, whose only nonzero component is $w$, satisfies ${\rm d}\Lambda(V) > 0$ and ${\rm d}F(V) > 0$, as desired.
\end{proof}

\section{Topology of cyclic configuration spaces}

Let $r\geq 2$ be an integer, and let $M$ be a topological space.
The {\em ordered configuration space} of $r$ pairwise distinct points on $M$ is the space
\[
\FF(M,r)\coloneqq \big\{(x_1,\ldots,x_r)\in M^r \colon x_i\neq x_j\ \text{for all}\ i\neq j \big\}.
\]
The symmetric group on $r$ letters $\Sym_r$ acts (from the left) on $\FF(M,r)$ by permuting the points, that is, for a permutation $\pi\in\Sym_r$
\[
\pi\cdot (x_1,\ldots,x_r) = (x_{\pi(1)},\ldots,x_{\pi(r)}).
\]
The {\em unlabeled configuration space} of $r$ pairwise distinct points on $M$ is the orbit space
$\FF(M\!, r)\!{/}\!\Sym_r$. We refer to F.~Cohen \cite{Cohen1976LNM533} and Fadell \& Husseini~\cite{Fadell-Husseomo2001} for background on configuration spaces.

Let us repeat that the {\em ordered cyclic configuration space} of $r\geq 2$ consecutively distinct points on $M$ is the space
\[
\GG(M,r)\coloneqq\big\{(x_1,\ldots,x_r)\in M^r\colon x_i\neq x_{i+1}\ \text{for all}\ i\big\},
\]
where by convention $x_{r+1}=x_1$. Clearly, $\FF(M,r)\subseteq \GG(M,r)\subset M^r$. The Dihedral group $D_r=\langle a,b\colon a^r=b^2=1, ab=ba^{r-1}\rangle$ acts naturally on $\GG(M,r)$ by
\begin{gather*}
	a\cdot(x_1,x_2,\ldots,x_{r-1},x_r) = (x_r,x_1,\ldots,x_{r-2},x_{r-1}),\\
	b\cdot(x_1,x_2,\ldots,x_{r-1},x_r) = (x_r,x_{r-1},\ldots,x_2,x_1).
\end{gather*}
On the other hand, due to the geometric restriction coming from the Finsler distance being not symmetric, we consider only the action of the cyclic subgroup $\Z_r=\langle a\rangle$ on $\GG(M,r)$. Thus, in this paper, the {\em unlabeled cyclic configuration space} of $r$ consecutively distinct points on $M$ is the orbit space $\GG(M,r)/\Z_r$.

In this section we study the topology of the unlabeled cyclic configuration space $\GG\big(S^{d-1}\!,r\big)/\Z_r\!$ for $r$ a prime. First, using an appropriate spectral sequence, we determine the cohomology of the unlabeled configuration space with coefficients in the field $\F_r$.
\begin{theorem}	\label{th : topological--main--1}
	Let $r\geq 3$ be a prime.
	\begin{enumerate}\itemsep=0pt
		\item[\rm (1)] Let $d$ be an even integer, and let
		\begin{gather*}
		A\coloneqq\{\ell(d-2),\ell(d-2)+1\colon 1\leq\ell\leq r-2 \},
		\\
		B\coloneqq\{0,1,\ldots, (r-1)(d-2)+1\}{\setminus}A.
		\end{gather*}
		Then
		\[
		H^n\big(\GG\big(S^{d-1},r\big)/\Z_r;\F_r\big)=
		\begin{cases}
			\F_r\oplus\F_r, & n\in A,\\
			\F_r , & n\in B,\\
			0, & \text{\rm otherwise}.
		\end{cases}
		\]
		
		\item[\rm (2)] Let $d$ be an odd integer, and let
		\begin{gather*}
		C\coloneqq\big\{2\ell(d-2),2\ell(d-2)+1\colon 1\leq\ell\leq\tfrac{r-3}{2} \big\},
		\\
		D\coloneqq\{0,1,\ldots, (r-1)(d-2)+1\}{\setminus}C.
		\end{gather*}
		Then
		\[
		H^n\big(\GG\big(S^{d-1},r\big)/\Z_r;\F_r\big)=
		\begin{cases}
			\F_r\oplus\F_r, & n\in C,\\
			\F_r , & n\in D,\\
			0, & \text{\rm otherwise}.
		\end{cases}
		\]
	\end{enumerate}
\end{theorem}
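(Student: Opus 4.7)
Since $r$ is prime, the $\Z_r$-action on $\GG(S^{d-1},r)$ is free: a nontrivial power $a^k$ with $0<k<r$ fixes $(x_1,\dots,x_r)$ only if $x_i=x_{i+k}$ for all $i$, which by primality of $r$ forces all $x_i$ to coincide, contradicting the consecutive-distinctness requirement. Hence the quotient $\GG(S^{d-1},r)/\Z_r$ is homotopy equivalent to the Borel construction $E\Z_r\times_{\Z_r}\GG(S^{d-1},r)$, and I would compute its mod-$r$ cohomology by the Serre spectral sequence
\[
E_2^{p,q} = H^p\bigl(\Z_r;\,H^q(\GG(S^{d-1},r);\F_r)\bigr) \;\Longrightarrow\; H^{p+q}(\GG(S^{d-1},r)/\Z_r;\F_r).
\]

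The key technical step is to determine the $\F_r[\Z_r]$-module structure of $H^*(\GG(S^{d-1},r);\F_r)$. I would view $\GG(S^{d-1},r)$ as the complement in $(S^{d-1})^r$ of the $\Z_r$-invariant subspace arrangement $\bigcup_{i=1}^r\{x_i=x_{i+1}\}$ of codimension $d-1$, and apply an equivariant Goresky-MacPherson/Bj\"orner-Ziegler type decomposition indexed by the nerve of the arrangement. Since $r$ is prime, every non-trivial orbit of $\Z_r$ on the proper non-empty subsets of $\{1,\dots,r\}$ is free, so their contributions to $H^*(\GG(S^{d-1},r);\F_r)$ assemble into free $\F_r[\Z_r]$-summands, while the empty and full strata contribute trivial summands. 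The outcome should be that $H^q$ is concentrated in the degrees $q=\ell(d-2)$ and $q=\ell(d-2)+1$ for $0\le \ell\le r-1$, with an explicit count of free versus trivial summands in each degree.

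Inserting this into the spectral sequence, $H^p(\Z_r;\F_r[\Z_r])=0$ for $p>0$, so free summands contribute only in column $p=0$, while each trivial summand contributes an infinite vertical tower $\F_r$ in every $(p,q)$ with $p\ge 0$. The finite-dimensionality and vanishing above degree $(r-1)(d-2)+1$ asserted by the theorem then force these towers to be cut off by transgression differentials. I would identify the first non-trivial differential as a transgression coming from the fundamental class of $S^{d-1}$ paired against the polynomial generator $v\in H^2(\Z_r;\F_r)$, and trace how it propagates through the page. The parity of $d$ enters precisely here: the odd-degree class $u\in H^1(\Z_r;\F_r)$ pairs nontrivially with an $(d-1)$-dimensional fundamental class only when $d-1$ is odd, so case (1) exhibits single jumps of size $d-2$ (producing the pairs $\{\ell(d-2),\ell(d-2)+1\}$), while case (2) exhibits doubled jumps of size $2(d-2)$ (producing $\{2\ell(d-2),2\ell(d-2)+1\}$).

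The principal obstacle is the first step: pinning down the $\F_r[\Z_r]$-module $H^*(\GG(S^{d-1},r);\F_r)$ cleanly enough to read off the $E_2$-page, in particular determining which summands in each degree are trivial and which are free. Once that structural information is in hand, the spectral sequence runs in a bounded range and the differentials are essentially forced by the vanishing of $H^n$ above $(r-1)(d-2)+1$ (a cutoff which itself should come from the top dimension of the non-degenerate cyclic stratum). Matching the resulting $E_\infty$-page against the sets $A,B$ and $C,D$ is then combinatorial bookkeeping.
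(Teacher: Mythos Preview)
Your spectral-sequence framework matches the paper's, but the structural picture you expect on the $E_2$-page is wrong, and this undermines the rest of your sketch. By Farber's computation of the ring $H^*(\GG(S^{d-1},r);\F_r)$ (see Proposition~\ref{prop : cohomology of unlabeled ring} and Corollary~\ref{cor : cohomology of unlabeled additive}), each group $H^q(\GG(S^{d-1},r);\F_r)$ is either $0$ or one-dimensional. A one-dimensional $\F_r[\Z_r]$-module is automatically trivial (if a generator of $\Z_r$ acts by $a\in\F_r^\times$ then $a^r=1$, so $a=1$). Thus there are \emph{no} free summands anywhere; the $E_2$-page is simply $H^p(\Z_r;\F_r)\otimes H^q(\GG(S^{d-1},r);\F_r)$, an infinite horizontal strip in each nonzero row. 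The Goresky--MacPherson step you describe is therefore unnecessary, and your expectation that ``free summands contribute only in column $p=0$'' does not apply.

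With every nonzero row being an infinite tower, the real work is in the differentials, and here your proposal is too vague. The paper uses two ingredients you do not invoke: first, a comparison with the Serre spectral sequence of $\E\Z_r\times_{\Z_r}\FF(\R^{d-1},r)$, whose differentials are completely known (Cohen), to show that nothing hits the $q=0$ row in columns $p\le (r-1)(d-2)$; second, the multiplicative structure. The fiber cohomology is generated by a single class $\alpha$ (resp.\ $\gamma$) of odd degree together with classes $\beta_i=\beta_1^i$ (resp.\ $\delta_i=\delta_1^i$), so the Leibniz rule determines \emph{all} differentials once one knows $\partial_2(1\otimes\alpha)$ (resp.\ $\partial_2(1\otimes\gamma)$). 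That this $\partial_2$ is nonzero is forced by the eventual vanishing of $E_\infty$ in high degree; then $\partial_2$ kills most of the strip, leaving only a finite stub in columns $p=0,1$ and the extreme rows, and a single further differential $\partial_{(r-1)(d-2)+2}$ truncates the bottom row. Your parity remark about $d$ is correct in spirit, but it enters through Farber's ring structure (the generators live in degrees $d-2$ and $d-1$ versus $2(d-2)$ and $2(d-2)+1$), not through a transgression of a sphere class against $u\in H^1(\Z_r;\F_r)$.
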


Second, using the same spectral sequence we derive the following estimate of the Lusternik--Schnirelmann category, which can be also deduced from the work of Karasev \cite[Theorem~7]{Karasev2009}.

\begin{theorem}	\label{th : topological--main--2}
	Let $d\geq 3$ be an integer, and let $r\geq 3$ be a prime.
	Then the Lusternik--Schnirelmann category of the unlabeled cyclic configuration space $\GG\big(S^{d-1},r\big)/\Z_r$ is bounded from below as follows:
	\begin{equation}		\label{LS estimate}
			\cat \big(\GG\big(S^{d-1},r\big)/\Z_r\big)\geq (r-1)(d-2)+1.
	\end{equation}
\end{theorem}	

The estimate of the Lusternik--Schnirelmann category \eqref{LS estimate} of the unlabeled cyclic configuration space $\GG\big(S^{d-1},r\big)/\Z_r$, in the case when $r$ is a prime, is obtained by exhibiting an element of the cohomology $H^{(r-1)(d-2)+1}(\B(\Z_r);\F_r)$ that does not vanish along the homomorphism
\[
\xymatrix{
H^*(\B(\Z_r);\F_r)\ar[rr]^-{p^*} & &H^*\big( \GG\big(S^{d-1},r\big)/\Z_r\big),
}
\]
which is induced by the unique, up to a homotopy, map $p\colon  \GG\big(S^{d-1},r\big)/\Z_r\longrightarrow\B(\Z_r)$, see Section~\ref{proof of Thm. 3.2}. For this we substitute the map $p$ with the projection map $\pi_G$ of the fibration
\[
\xymatrix{
\E(\Z_r)\times_{\Z_r} \GG\big(S^{d-1},r\big)\ar[rr]^-{\pi_G} & & \B(\Z_r)
}
\]
and study the correspond Serre spectral sequence $E_*^{*,*}\big(\E(\Z_r)\times_{\Z_r} \GG\big(S^{d-1},r\big)\big)$, see Section~\ref{subsec : Setting up a spectral sequence}. The differentials of this spectral sequence are analyzed in Section~\ref{sec : Computing the spectral sequence} using the comparison of spectral sequences induced by the morphism of fiber bundles
\[
	\xymatrix{
	 \E(\Z_r)\times_{\Z_r} \FF\big(\R^{d-1},r\big)\ar[d]^{\pi_F}\ar[rr]^-{\id\times_{\Z_r}i} & & \E(\Z_r)\times_{\Z_r} \GG\big(S^{d-1},r\big)\ar[d]^{\pi_G}\\
	 \B(\Z_r)\ar[rr]^-{\id} & & \B(\Z_r).
	}
\]
In this way we identify an element of the cohomology $u\in H^{(r-1)(d-2)+1}(\Z_r;\F_r)$ with the property that $\pi_G^*(u)\neq 0$, and consequently $p^*(u)\neq 0$.
Using the classical concept of category weight of an element of cohomology, and it properties summarized in Lemma \ref{lem:LS-2}, we establish the lower bound \eqref{LS estimate}.

\subsection{Setting up a spectral sequence}\label{subsec : Setting up a spectral sequence}

Now we set up a spectral sequence that converges to the cohomology of the unlabeled cyclic configuration space $H^*\big(\GG\big(S^{d-1},r\big)/\Z_r;\F_r\big)$. We use the fact that the group $\Z_r$ acts freely on the cyclic configuration space $\GG\big(S^{d-1},r\big)$.

Thus, since $\Z_r$ acts freely on $\GG\big(S^{d-1},r\big)$, the Borel construction $\E(\Z_r)\times_{\Z_r} \GG\big(S^{d-1},r\big)$ and the quotient space $\GG\big(S^{d-1},r\big)/\Z_r$ are homotopy equivalent.
Indeed, the map
\[
\E(\Z_r)\times_{\Z_r} \GG\big(S^{d-1},r\big)\longrightarrow \GG\big(S^{d-1},r\big)/\Z_r,
\]
induced by the $\Z_r$-equivariant projection on the second factor $\E(\Z_r)\times \GG\big(S^{d-1},r\big)\!\longrightarrow\! \GG\big(S^{d-1},r\big),\!$ is a fibration with a contractible fiber $\E(\Z_r)$, and therefore a homotopy equivalence. Hence,
\[
H^*\big(\GG\big(S^{d-1},r\big)/\Z_r;\F_r\big)\cong H^*\big(\E(\Z_r)\times_{\Z_r} \GG\big(S^{d-1},r\big);\F_r\big),
\]
and so we compute the cohomology of the Borel construction instead.
An advantage of the Borel construction is that it is the total space of the following fibration
\begin{equation}
\label{eq : fibration}
\xymatrix{
\GG\big(S^{d-1},r\big)\ar[r] & \E(\Z_r)\times_{\Z_r} \GG\big(S^{d-1},r\big)\ar[r]^-{\pi_G} & \B(\Z_r),
}	
\end{equation}
where $\pi_G$ is induced by the $\Z_r$-equivariant projection on the first factor $\E(\Z_r)\times \GG\big(S^{d-1},r\big)\longrightarrow \E(\Z_r)$.

The Serre spectral sequence induced by the fibration \eqref{eq : fibration} has the $E_2$-term given by
\begin{align}
E_2^{p,q}\big(\E(\Z_r)\times_{\Z_r} \GG\big(S^{d-1},r\big)\big) &=  \mathcal{H}^p\big(\B(\Z_r); H^q\big(\GG\big(S^{d-1},r\big);\F_r\big)\big)\nonumber\\
&\cong  H^p\big(\Z_r; H^q\big(\GG\big(S^{d-1},r\big);\F_r\big)\big)\label{eq : E2 term}.
\end{align}
Here $\mathcal{H}^*(\cdot)$ indicates that the we have the cohomology with local coefficients; for more details consult for example \cite[Section~3.H]{Hatcher2002}.
The local system is determined by the action of the fundamental group of the base space $\pi_1(\B(\Z_r))\cong\Z_r$ on the cohomology of a fiber of the fibration~\eqref{eq : fibration}.
The second notation assumes that the coefficients we have for the group cohomology of $\Z_r$ are given in the $\Z_r$-module $H^q\big(\GG\big(S^{d-1},r\big);\F_r\big)$.
In the case when $\F_r$ is a trivial $\Z_r$-module we set $H^*(\Z_r;\F_r)=\F_r[t]\otimes\Lambda(e)$ where $\deg(t)=2$, $\deg(e)=1$, and $\Lambda(\cdot)$ denotes the exterior algebra.

All the cohomologies we work with are $\F_r$ vector spaces and therefore the Serre spectral sequence induced by the fibration~\eqref{eq : fibration} converges to the cohomology $H^*\big(\E(\Z_r)\times_{\Z_r} \GG\big(S^{d-1},r\big);\F_r\big)$ as a vector space, that is
\[
H^n\big(\GG\big(S^{d-1},r\big)/\Z_r;\F_r\big)\cong H^n\big(\E(\Z_r)\times_{\Z_r} \GG\big(S^{d-1},r\big);\F_r\big)\cong \bigoplus_{p+q=n}E^{p,q}_{\infty},
\]
for every integer $n\geq 0$. In turn, since $\GG\big(S^{d-1},r\big)/\Z_r$ is an open $(r(d-1))$-manifold and has no cohomology in dimensions $\geq r(d-1)$, we have that $E^{p,q}_{\infty}=0$ for all $p+q\geq r(d-1)$.
For more details about Serre spectral sequences consult for example \cite[Chapters~5--6]{McCleary2001} or \cite[Chapter~3]{Fomenko2016}.

The first ingredient in the computation of the $E_2$-term of the spectral sequence~\eqref{eq : E2 term} is a~description of the cohomology of the fiber $H^q\big(\GG\big(S^{d-1},r\big);\F_r\big)$.
For that we use the results of Farber \cite[Theorems~18 and~19]{Farber2002-II}.

\begin{Proposition}	\label{prop : cohomology of unlabeled ring}
	Let $d\geq 3$ be an integer, and let $r$ be an odd prime.
	\begin{enumerate}\itemsep=0pt
	\item[\rm (1)] If $d$ is even, then the cohomology ring $H^*\big(\GG\big(S^{d-1},r\big);\F_r\big)$ of the ordered cyclic configuration space $\GG\big(S^{d-1},r\big)$ is generated by the elements
	\[
	\alpha, \
	\beta_1, \
	\beta_2, \ \ldots, \
	\beta_{r-2},
	\]
	of degrees
	\begin{gather*}
	\deg(\alpha)=d-1, \ \deg(\beta_1)=d-2, \ \deg(\beta_2)=2(d-2), \ \ldots , \\ \deg(\beta_{r-2})=(r-2)(d-2),
	\end{gather*}
	subject to the relations
	\[
	\alpha^2=0,  \qquad
	\beta_i\beta_j=
	\begin{cases}
		\dfrac{(i+j)!}{i!\,j!}\beta_{i+j}, & \text{for } i+j\leq r-2,\\
		0,								 & \text{otherwise},
	\end{cases}
	\]
	for $1\leq i\leq j\leq r-2$.
	In particular, for every $1\leq k\leq r-2$ we have that $\beta_k=a_k\cdot\beta_1^k$ where $a_k\in\F_r{\setminus}\{0\}$.
	\item[\rm (2)] If $d$ is odd, then the cohomology ring $H^*\big(\GG\big(S^{d-1},r\big);\F_r\big)$ of the ordered cyclic configuration space $\GG\big(S^{d-1},r\big)$ is generated by the elements
	\[
	\gamma, \
	\delta_1, \
	\delta_2, \ \ldots, \
	\delta_{\frac{r-3}2},
	\]
	of degrees
	\begin{gather*}
	\deg(\gamma)=2(d-2)+1, \ \deg(\delta_1)=1\cdot 2(d-2), \ \deg(\delta_2)=2\cdot 2(d-2), \ \ldots , \\ \deg\big(\delta_{\frac{r-3}2}\big)=\tfrac{r-3}{2}\cdot 2(d-2),
	\end{gather*}
	subject to the relations
	\[
	\gamma^2=0,   \qquad
	\delta_i\delta_j=
	\begin{cases}
		\dfrac{(i+j)!}{i!\,j!}\delta_{i+j}, & \text{for } i+j\leq \tfrac{r-3}2,\\
		0,								 & \text{otherwise},
	\end{cases}
	\]
	for $1\leq i\leq j\leq \tfrac{r-3}2$. In particular, for every $1\leq k\leq \tfrac{r-3}2$ we have that $\delta_k=b_k\cdot\delta_1^k$ where $b_k\in\F_r{\setminus}\{0\}$.
	\end{enumerate}
\end{Proposition}

Transforming the previous information on the cohomology ring of the ordered cyclic con\-fi\-guration space $\GG\big(S^{d-1},r\big)$ into an additive language we get the following corollary.

\begin{Corollary}	\label{cor : cohomology of unlabeled additive}
	Let $d\geq 3$ be an integer, and let $r$ be an odd prime.
	\begin{enumerate}\itemsep=0pt
	\item[\rm (1)] If $d$ is even and $A=\{\ell(d-2),\ell(d-2)+1 \colon 1\leq\ell\leq r-2 \}\cup\{0,(r-1)(d-2)+1\}$, then
	\[
	H^n\big(\GG\big(S^{d-1},r\big);\F_r\big)=
		\begin{cases}
			\F_r, & n\in A,\\
			0, & \text{\rm otherwise}.
		\end{cases}
	\]
	\item[\rm (2)] If $d$ is odd and $B=\big\{2\ell(d-2),2\ell(d-2)+1 \colon 1\leq\ell\leq \tfrac{r-3}2 \big\}\cup\{0,(r-1)(d-2)+1\}$, then
	\[
	H^n\big(\GG\big(S^{d-1},r\big);\F_r\big)=
		\begin{cases}
			\F_r, & n\in B,\\
			0, & \text{\rm otherwise}.
		\end{cases}
	\]
	\end{enumerate}
\end{Corollary}

From Proposition \ref{prop : cohomology of unlabeled ring} and its consequence Corollary~\ref{cor : cohomology of unlabeled additive} we can detect the action of the fundamental group of the base space $\pi_1(\B(\Z_r))\cong\Z_r$ on the cohomology of the ordered cyclic configuration space and compute the $E_2$-term of the Serre spectral sequence~\eqref{eq : E2 term}; for an illustration of the $E_2$-term in the case of even $d$ see Fig.~\ref{fig : 02}.

\begin{figure}[ht!]\centering
\includegraphics[scale=0.8]{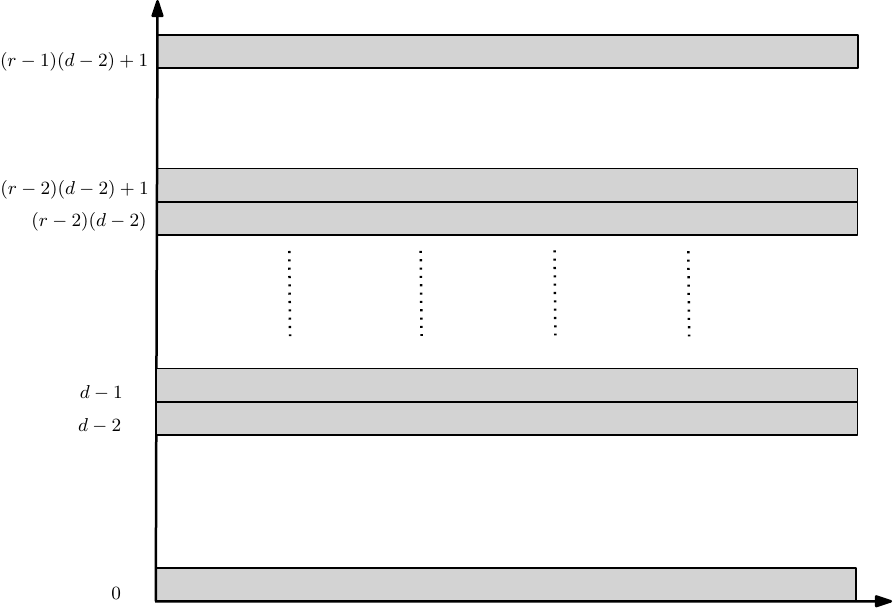}
\caption{\small The $E_2$-term of the Serre spectral sequence~\eqref{eq : E2 term} in the case when $d\geq 4$ is even.}\label{fig : 02}
\end{figure}

\begin{Corollary}	\label{cor : E_2 term }
		Let $d\geq 3$ be an integer, and let $r$ be an odd prime.
		The $E_2$-term of the Serre spectral sequence~\eqref{eq : E2 term} has the following description.
	\begin{enumerate}\itemsep=0pt
	\item[\rm (1)] If $d$ is even and $A=\{\ell(d-2),\ell(d-2)+1 \colon 1\leq\ell\leq r-2 \}\cup\{0,(r-1)(d-2)+1\}$, then
	\begin{align*}
	E_2^{p,q}&\cong   H^p\big(\Z_r; H^q\big(\GG\big(S^{d-1},r\big);\F_r\big)\big)	\\
			 &\cong   H^p(\Z_r;\F_r)\otimes H^q\big(\GG\big(S^{d-1},r\big);\F_r\big)\cong
		\begin{cases}
			H^p(\Z_r;\F_r), & n\in A,\\
			0, & \text{\rm otherwise}.
		\end{cases}
	\end{align*}

	\item[\rm (2)] If $d$ is odd and $B=\big\{2\ell(d-2),2\ell(d-2)+1 \colon 1\leq\ell\leq \tfrac{r-3}2 \big\}\cup\{0,(r-1)(d-2)+1\}$, then
	\begin{align*}
	 E_2^{p,q} &\cong   H^p\big(\Z_r; H^q\big(\GG\big(S^{d-1},r\big);\F_r\big)\big)	\\
	 		 &\cong   H^p(\Z_r;\F_r)\otimes H^q\big(\GG\big(S^{d-1},r\big);\F_r\big)\cong
		\begin{cases}
			H^p(\Z_r;\F_r), & n\in B,\\
			0, & \text{\rm otherwise}.
		\end{cases}
	\end{align*}
	\end{enumerate}
\end{Corollary}	

\begin{proof}
In the fibration \eqref{eq : fibration}, the cohomology groups of the fiber $H^q\big(\GG\big(S^{d-1},r\big);\F_r\big)$ are isomorphic either to $0$ or to $\F_r$.
Thus the group $\pi_1(\B(\Z_r))\cong\Z_r$ can only act trivially on $H^q\big(\GG\big(S^{d-1},r\big);\F_r\big)$:
Indeed, if $f\colon \F_r\longrightarrow \F_r$ is an $\F_r$-linear map induced by the action of a~ge\-ne\-rator of $\Z_r$, and $f(1)=a\in \F_r$, then by Fermat's little theorem $1=\id_{\F_r}(1)=f^r(1)=a^r=a$ and so $f=\id_{\F_r}$. Consequently, the cohomology $H^q\big(\GG\big(S^{d-1},r\big);\F_r\big)$ is a trivial $\Z_r$-module for every~$q$, and the statement of the corollary follows.
\end{proof}

\begin{Remark}It is important to point out that the previous corollary also implies that the differentials of the Serre spectral sequence~\eqref{eq : E2 term} are not only $H^*(\Z_r;\F_r)$-module morphisms, but even more, they satisfy the Leibniz rule; see \cite[Definition~1.6 and Proposition~5.6]{McCleary2001}.
\end{Remark}

\subsection{Computing the spectral sequence}\label{sec : Computing the spectral sequence}

In this section, before making explicit computation, we consider the ordered configuration space $\FF\big(\R^{d-1},r\big)$ as a subset of the ordered cyclic configuration space $\GG\big(S^{d-1},r\big)$.
The $\Z_r$-equivariant inclusion $i\colon \FF\big(\R^{d-1},r\big)\longrightarrow\GG\big(S^{d-1},r\big)$ induces the following morphism of Borel construction fibrations:
\begin{equation}\label{diag : morphism of Borel constructions}
\begin{split}&
	\xymatrix{
	 \E(\Z_r)\times_{\Z_r} \FF\big(\R^{d-1},r\big)\ar[d]^{\pi_F}\ar[rr]^-{\id\times_{\Z_r}i} & & \E(\Z_r)\times_{\Z_r} \GG\big(S^{d-1},r\big)\ar[d]^{\pi_G}\\
	 \B(\Z_r)\ar[rr]^-{\id} & & \B(\Z_r).
	}\end{split}
\end{equation}
The morphism of fibrations~\eqref{diag : morphism of Borel constructions} induces a morphism of associated Serre spectral sequences
\begin{equation}
	\label{eq : morphism of SSS}
	\xymatrix{
	E^{*,*}_{*}(\id\times_{\Z_r}i)\colon \ E^{*,*}_{*}\big(\E(\Z_r)\times_{\Z_r} \FF\big(\R^{d-1},r\big)\big) &
	E^{*,*}_{*}\big(\E(\Z_r)\times_{\Z_r} \GG\big(S^{d-1},r\big)\big),\ar[l]
	}
\end{equation}
which is the identity homomorphism on the zero row of the $E_2$-term, that is $E^{*,0}_{2}(\id\times_{\Z_r}i)=\id$.

The Serre spectral sequence $E^{*,*}_{*}\big(\E(\Z_r)\times_{\Z_r} \FF\big(\R^{d-1},r\big)\big)$ has been completely described first by F.~Cohen in his seminal paper \cite[Sections~8--10]{Cohen1976LNM533}, and much later, from the point of view of the equivariant Goresky--MacPherson formula, in \cite[Theorem~6.1]{Blagojevic2015}. For an illustration of the $E_2$-term of this spectral sequence see Fig.~\ref{fig : 03}. In particular, we will use the following facts proved in \cite{Blagojevic2015, Cohen1976LNM533}.

\begin{Proposition}	\label{prop : SS-Cohen}Let $d\geq 3$ be an integer, and let $r$ be an odd prime. The Serre spectral sequence associated to the Borel construction fibration
\begin{equation}	\label{eq : fibration-conf}
	\xymatrix{
\FF\big(\R^{d-1},r\big)\ar[r] & \E(\Z_r)\times_{\Z_r} \FF\big(\R^{d-1},r\big)\ar[r]^-{\pi_F} & \B(\Z_r),
}	
\end{equation}
with the $E_2$-term
\[
E^{p,q}_2=\mathcal{H}^p\big(\B(\Z_r);H^q\big(\FF\big(\R^{d-1},r\big);\F_r\big)\big)\cong H^p\big(\Z_r;H^q\big(\FF\big(\R^{d-1},r\big);\F_r\big)\big)
\]
has to following properties:
\begin{enumerate}\itemsep=0pt
\item[\rm (1)] For all $p\geq 1$ and $1\leq q \leq (r-1)(d-2)-1$, we have that
\[
E^{p,q}_2\big(\E(\Z_r)\times _{\Z_r}\FF\big(\R^{d-1},r\big)\big)=E^{p,q}_{\infty}\big(\E(\Z_r)\times _{\Z_r}\FF\big(\R^{d-1},r\big)\big)=0.
\]

\item[\rm (2)] For all $p\in\Z$ and $q\geq (r-1)(d-2)+1$, we have that
\[
E^{p,q}_2\big(\E(\Z_r)\times _{\Z_r}\FF\big(\R^{d-1},r\big)\big)=E^{p,q}_{\infty}\big(\E(\Z_r)\times _{\Z_r}\FF\big(\R^{d-1},r\big)\big)=0.
\]

\item[\rm (3)] For all $p\geq 0$ and $2\leq s \leq (r-1)(d-2)$, the differential
\[
\partial_{s}\colon \
E_{s}^{p,s-1}\big(\E(\Z_r)\times _{\Z_r}\FF\big(\R^{d-1},r\big)\big)\longrightarrow E_{s}^{p+s,0}\big(\E(\Z_r)\times_{\Z_r}\FF\big(\R^d,r\big)\big)
\]
vanishes.
Consequently, for $0\leq p \leq (r-1)(d-2)$, we have that
\[
H^p(\Z_r;\F_r) = E^{p,0}_2\big(\E(\Z_r)\times _{\Z_r}\FF\big(\R^{d-1},r\big)\big) = E_{\infty}^{p,0}\big(\E(\Z_r)\times _{\Z_r}\FF\big(\R^{d-1},r\big)\big).
\]

\item[\rm (4)] The only non-zero differential of the spectral sequence is
\begin{gather*}
\partial_{(d-2)(r-1)+1}\colon \
E_{(d-2)(r-1)+1}^{p,(d-2)(r-1)}\big(\E(\Z_r)\times _{\Z_r}\FF\big(\R^{d-1},r\big)\big)\\
\hphantom{\partial_{(d-2)(r-1)+1}\colon}{} \ \longrightarrow E_{(d-2)(r-1)+1}^{p+(d-2)(r-1)+1,0}\big(\E(\Z_r)\times_{\Z_r}\FF\big(\R^{d-1},r\big)\big),
\end{gather*}
and is an isomorphism for all $p\geq 0$.
Consequently, for all $p\geq (r-1)(d-2)+1$, one has
\[
E_{(d-2)(r-1)+2}^{p,0}\big(\E(\Z_r)\times _{\Z_r}\FF\big(\R^{d-1},r\big)\big)\cong E_{\infty}^{p,0}\big(\E(\Z_r)\times _{\Z_r}\FF\big(\R^{d-1},r\big)\big)=0.
\]
\end{enumerate}
\end{Proposition}

\begin{figure}[ht!]
\centering
\includegraphics[scale=0.8]{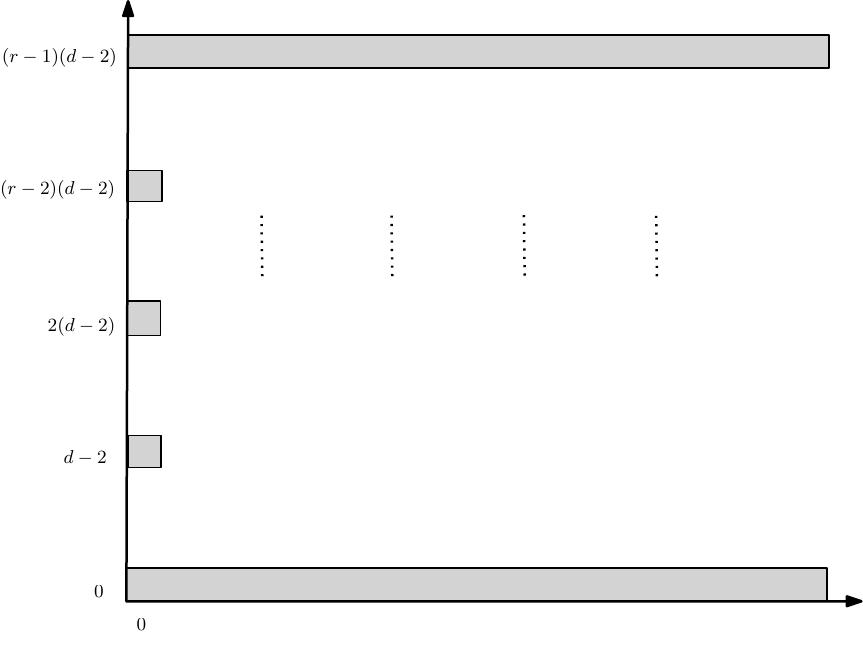}
\caption{\small The $E_2=E_{(r-1)(d-2)}$-term of the Serre spectral sequence of the fibration \eqref{eq : fibration-conf}.}\label{fig : 03}
\end{figure}

From the existence of the morphism $E^{*,*}_{*}(\id\times_{\Z_r}i)$ of the Serre spectral sequences~\eqref{eq : morphism of SSS}, the fact that $E^{*,0}_{2}(\id\times_{\Z_r}i)$ is the identity, and the following equality from Proposition~\ref{prop : SS-Cohen}:
\[
E_{\infty}^{p,0}\big(\E(\Z_r)\times _{\Z_r}\FF\big(\R^{d-1},r\big)\big) = E^{p,0}_2\big(\E(\Z_r)\times _{\Z_r}\FF\big(\R^{d-1},r\big)\big)=H^p(\Z_r;\F_r),
\]
which holds for $0\leq p \leq (r-1)(d-2)$, we deduce an important property of the Serre spectral sequence $E_*^{*,*}\big(\E(\Z_r)\times_{\Z_r} \GG\big(S^{d-1},r\big)\big)$ stated in the next corollary.

\begin{Corollary}\label{cor : differentials in zero row}
	Let $d\geq 3$ be an integer, and let $r$ be an odd prime.	For $0\leq p\leq (r-1)(d-2)$, one has
\[
E_{\infty}^{p,0}\big(\E(\Z_r)\times _{\Z_r}\GG\big(S^{d-1},r\big)\big) \cong E^{p,0}_2\big(\E(\Z_r)\times _{\Z_r}\GG\big(S^{d-1},r\big)\big) \cong H^p(\Z_r;\F_r).
\]
	In particular, all the differentials
\[
\partial_{s}\colon \
E_{s}^{p,s-1}\big(\E(\Z_r)\times _{\Z_r}\GG\big(S^{d-1},r\big)\big)\longrightarrow E_{s}^{p+s,0}\big(\E(\Z_r)\times_{\Z_r}\GG\big(S^{d-1},r\big)\big)
\]
vanish for $p\geq 0$ and $2\leq s \leq (r-1)(d-2)$.
\end{Corollary}

Combining the previous fact about the Serre spectral sequence $E_*^{*,*}\big(\E(\Z_r)\times_{\Z_r} \GG\big(S^{d-1},r\big)\big)$ with the observation that $E_{\infty}^{p,q}\big(\E(\Z_r)\times_{\Z_r} \GG\big(S^{d-1},r\big)\big)=0$ for all $p+q\geq r(d-1)$, we have that for some $s\geq (r-1)(d-2)+1$ the differential
\[
\partial_{s}\colon \
E_{s}^{p,s-1}\big(\E(\Z_r)\times _{\Z_r}\GG\big(S^{d-1},r\big)\big)\longrightarrow E_{s}^{p+s,0}\big(\E(\Z_r)\times_{\Z_r}\GG\big(S^{d-1},r\big)\big)
\]
does {\it not} vanish. In particular,
\[
E_{\infty}^{p,0}\big(\E(\Z_r)\times _{\Z_r}\GG\big(S^{d-1},r\big)\big)=0
\]
for all $p\geq r(d-1)$.

Now the properties of the Serre spectral sequence $E_*^{*,*}\big(\E(\Z_r)\times_{\Z_r} \GG\big(S^{d-1},r\big)\big)$, derived so far, will be used to completely compute it.
The computation proceeds in two separate steps, depending on the parity of~$d$.

\subsubsection[$d$ is an even integer, $d\geq 4$]{$\boldsymbol{d}$ is an even integer, $\boldsymbol{d\geq 4}$}\label{subsub -- 01}
The Serre spectral sequence $E^{*,*}_{*}\big(\E(\Z_r)\times_{\Z_r} \GG\big(S^{d-1},r\big)\big)$ has a multiplicative structure, and differentials satisfy the Leibniz rule.
Thus, to compute the spectral sequence completely it suffices to determine values of the differentials only on the generators of the cohomology ring of the fiber.
In this particular case, we need to evaluate the differentials on the elements $1\otimes\alpha,1\otimes\beta_1,\ldots, 1\otimes\beta_{r-1}$.
It is important to notice that
\[
(1\otimes\beta_1)^i=1\otimes\beta_1^i=a_i\cdot(1\otimes\beta_i)
\]
for $1\leq i\leq r-1$ and some $a_i\in\F_r{\setminus}\{0\}$.

\begin{figure}[ht!]\centering
\includegraphics[scale=0.8]{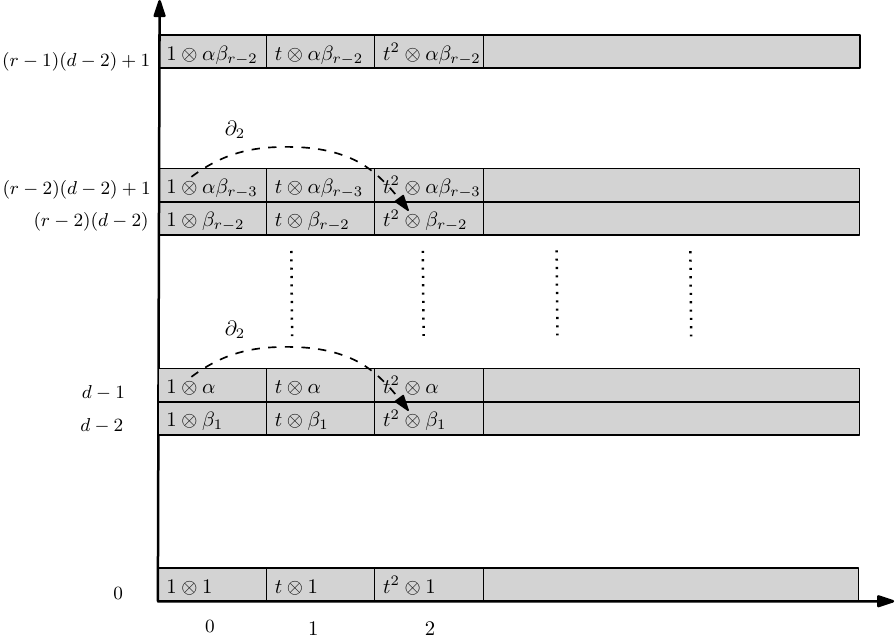}
\caption{\small For $d\geq 4$ even the $E_2$-term with differentials of the Serre spectral sequence \eqref{eq : E2 term}.}\label{fig : 04}
\end{figure}

Consider the differential $\partial_2$. The degrees of the cohomology generators $\beta_1,\ldots, \beta_{r-1}$ are $(d-2),\ldots,(r-2)(d-2)$, respectively. Since $H^i\big(\GG\big(S^{d-1},r\big);\F_r\big)=0$ vanishes for all $i\in\{(d-2)-1,\ldots,(r-2)(d-2)-1\}$, one has
\begin{equation}\label{eq : beta's}
\partial_2(1\otimes\beta_1)=\partial_2(1\otimes\beta_2)=\cdots=\partial_2(1\otimes\beta_{r-1})=0.	
\end{equation}
Thus, to completely determine the second differential we need to find the value
\[
\partial_2(1\otimes\alpha)\in E^{2,d-2}_{2}\big(\E(\Z_r)\times_{\Z_r} \GG\big(S^{d-1},r\big)\big).
\]

Let us assume that $\partial_2(1\otimes\alpha)=0$.
Then, due to the multiplicative property of the Serre spectral sequence, we have that the second differential $\partial_2$ vanishes, implying that
\[
E^{*,*}_{2}\big(\E(\Z_r)\times_{\Z_r} \GG\big(S^{d-1},r\big)\big)\cong E^{*,*}_{3}\big(\E(\Z_r)\times_{\Z_r} \GG\big(S^{d-1},r\big)\big).
\]
The vanishing of cohomology groups $H^*\big(\GG\big(S^{d-1},r\big);\F_r\big)$ in appropriate dimensions implies that the next possible non-zero differential is~$\partial_{d-1}$.
More precisely, $\partial_{d-1}(1\otimes\beta_1)$ might be non-zero while we know that $\partial_{d-1}(1\otimes\alpha)=0$.
Since $d-1\leq (r-1)(d-2)$, using Corollary~\ref{cor : differentials in zero row}, we get that $\partial_{d-1}(1\otimes\beta_1)=0$, and consequently the differential $\partial_{d-1}$ vanishes.
For the next differential we know that $\partial_{d}(1\otimes\beta_1)=0$, and since $d\leq (r-1)(d-2)$ from Corollary~\ref{cor : differentials in zero row}, we have that $\partial_{d}(1\otimes\alpha)=0$.
Hence, the differential~$\partial_{d}$ also vanishes.
In particular this means that
\[
E^{*,*}_{2}\big(\E(\Z_r)\times_{\Z_r} \GG\big(S^{d-1},r\big)\big)\cong  \cdots  \cong E^{*,*}_{d+1}\big(\E(\Z_r)\times_{\Z_r} \GG\big(S^{d-1},r\big)\big).
\]
Since for $s\geq d+1$ all differentials $\partial_s(1\otimes\beta_1)$ and $\partial_s(1\otimes\alpha)$ are zero, the multiplicative property of the Serre spectral sequence implies that all the differentials $\partial_s$ vanish. Thus
\[
E^{*,*}_{2}\big(\E(\Z_r)\times_{\Z_r} \GG\big(S^{d-1},r\big)\big)\cong E^{*,*}_{\infty}\big(\E(\Z_r)\times_{\Z_r} \GG\big(S^{d-1},r\big)\big).
\]
This is a contradiction to the fact, observed after Corollary~\ref{cor : differentials in zero row}, that at least one of the differentials
\[
\partial_{s}\colon \
E_{s}^{p,s-1}\big(\E(\Z_r)\times _{\Z_r}\GG\big(S^{d-1},r\big)\big)\longrightarrow E_{s}^{p+s,0}\big(\E(\Z_r)\times_{\Z_r}\GG\big(S^{d-1},r\big)\big),
\]
for $s\geq (r-1)(d-2)+1$, does not vanish. Therefore, $\partial_2(1\otimes\alpha)\neq 0$.

Set $\partial_2(1\otimes\alpha)=a\cdot (t\otimes\beta_1)$ where $a\in\F_r{\setminus}\{0\}$; for illustration see Fig.~\ref{fig : 04}.
The multiplicative property of the Serre spectral sequence, combined with~\eqref{eq : beta's}, yields that
\begin{align*}
\partial_2(1\otimes \alpha\beta_i)&=\partial_2((1\otimes \alpha) (1\otimes \beta_i))
 =\partial_2(1\otimes \alpha) (1\otimes \beta_i)+(-1)^{d-1}(1\otimes \alpha)\partial_2(1\otimes \beta_i)\\
&=\partial_2(1\otimes \alpha) (1\otimes \beta_i) =a\cdot (t\otimes\beta_1) (1\otimes \beta_i)\\
&=\begin{cases}
	\left(a\dfrac{(i+1)!}{i!1!}\right)\cdot (t\otimes\beta_{i+1}), & 1\leq i\leq r-3,\\
	0, & i=r-2.
\end{cases}
\end{align*}

\begin{figure}[ht]\centering
\includegraphics[scale=0.8]{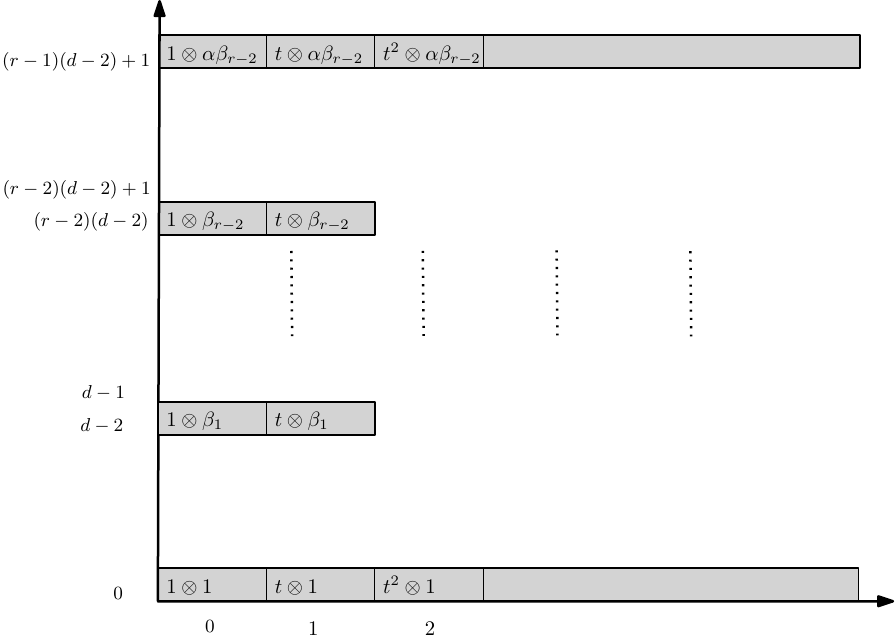}
\caption{\small For $d\geq 4$ even the $E_3$-term of the Serre spectral sequence \eqref{eq : E2 term}.}\label{fig : 05}
\end{figure}

Thus the $E_3$-term, illustrated in Fig.~\ref{fig : 05}, is given by
\begin{gather*}
E^{p,q}_{3}\big(\E(\Z_r)\times_{\Z_r} \GG\big(S^{d-1},r\big)\big)\\
\qquad{} \cong
\begin{cases}
	H^p(\Z_r;\F_r), & q\in\{0,(r-1)(d-2)+1\}, \\
	\F_r, & p\in\{0,1\},\,q\in\{(d-2),\ldots, (r-2)(d-2)\},\\
	0, & \text{otherwise}.
\end{cases}		
\end{gather*}
Moreover the multiplicative property of the Serre spectral sequence implies that all the differentials $\partial_3,\ldots,\partial_{(r-1)(d-2)+1}$ vanish.
Therefore
\[
E^{*,*}_{3}\big(\E(\Z_r)\times_{\Z_r} \GG\big(S^{d-1},r\big)\big)\cong   \cdots \cong E^{*,*}_{(r-1)(d-2)+2}\big(\E(\Z_r)\times_{\Z_r} \GG\big(S^{d-1},r\big)\big).
\]

The differential $\partial_{(r-1)(d-2)+2}$ is the only remaining differential that can be non-zero.
Since
\[
E^{p,0}_{2}\big(\E(\Z_r)\times_{\Z_r} \GG\big(S^{d-1},r\big)\big)\cong \cdots \cong E^{p,0}_{(r-1)(d-2)+2}\big(\E(\Z_r)\times_{\Z_r} \GG\big(S^{d-1},r\big)\big),
\]
and we know that
\[
E_{2}^{p,0}\big(\E(\Z_r)\times _{\Z_r}\GG\big(S^{d-1},r\big)\big)\neq 0 = E_{\infty}^{p,0}\big(\E(\Z_r)\times _{\Z_r}\GG\big(S^{d-1},r\big)\big)
\]
for all $p\geq r(d-1)$, we conclude that $\partial_{(r-1)(d-2)+2}\neq 0$.
The multiplicative property of the Serre spectral sequence again yields that $\partial_{(r-1)(d-2)+2}$ is completely determined by its image at $1\otimes\alpha\beta_{r-2}$, which has to be non-zero.
Set
\[
\partial_{(r-1)(d-2)+2}(1\otimes\alpha\beta_{r-2})=b\cdot\big(t^{\frac{(r-1)(d-2)+2}2}\otimes 1\big)
\]
 for some $b\in\F_r{\setminus}\{0\}$, as illustrated in Fig.~\ref{fig : 06}.
Consequently, the differentials
\begin{gather*}
\partial_{(r-1)(d-2)+2}\colon \
E_{s}^{p,(r-1)(d-2)+1}\big(\E(\Z_r)\times _{\Z_r}\GG\big(S^{d-1},r\big)\big)\\
\hphantom{\partial_{(r-1)(d-2)+2}\colon}{} \ \longrightarrow E_{s}^{p+(r-1)(d-2)+2,0}\big(\E(\Z_r)\times_{\Z_r}\GG\big(S^{d-1},r\big)\big)
\end{gather*}
are isomorphisms for all $p\geq 0$.

\begin{figure}[ht]\centering
\includegraphics[scale=0.8]{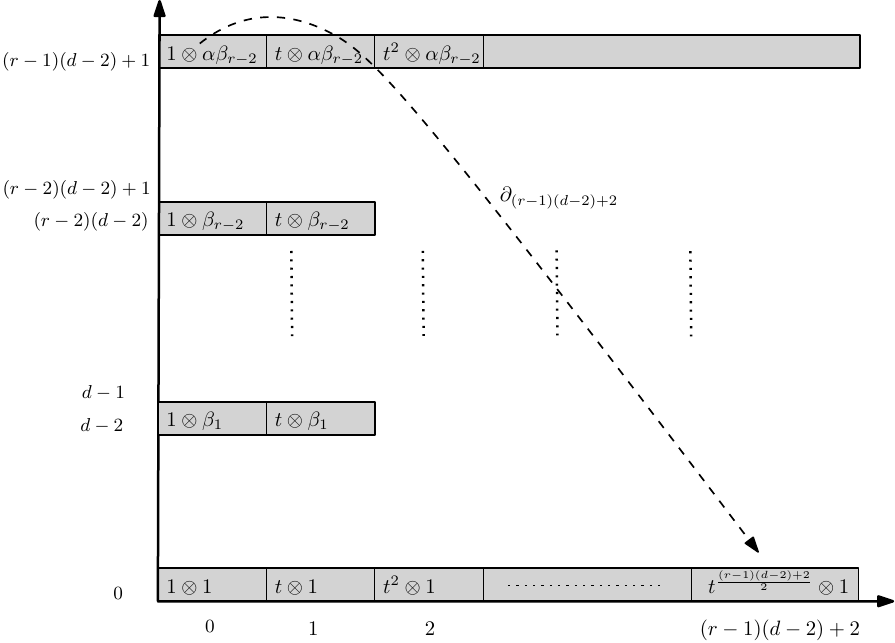}
\caption{\small For $d\geq 4$ even the differential $\partial_{(r-1)(d-2)+2}$ of the Serre spectral sequence \eqref{eq : E2 term}.}\label{fig : 06}
\end{figure}
Hence, the $E_{\infty}$-term is given by
\begin{gather}
E^{p,q}_{\infty}\big(\E(\Z_r)\times_{\Z_r} \GG\big(S^{d-1},r\big)\big)\nonumber\\
\qquad{} \cong
\begin{cases}
	H^p(\Z_r;\F_r), & q=0, \, 0\leq p \leq (r-1)(d-2)+1, \\
	\F_r, & p\in\{0,1\},\, q\in\{(d-2),\ldots, (r-2)(d-2)\},\\
	0, & \text{otherwise},\label{eq : E_{infty} -- 1}
\end{cases}	
\end{gather}
and illustrated in Fig.~\ref{fig : 07}.
Moreover, we have obtained that the map $\pi_G^*$ induced by the projection map of the fibration \eqref{eq : fibration} in cohomology
\begin{equation}
\label{eq : isomorphism -- 1}
	\pi_G^*\colon \ H^p(\Z_r;\F_r)\longrightarrow H^p\big(\E(\Z_r)\times_{\Z_r} \GG\big(S^{d-1},r\big);\F_r\big)
\end{equation}
is an injection for $0\leq p\leq (r-1)(d-2)+1$.
\begin{figure}[h!]\centering
\includegraphics[scale=0.8]{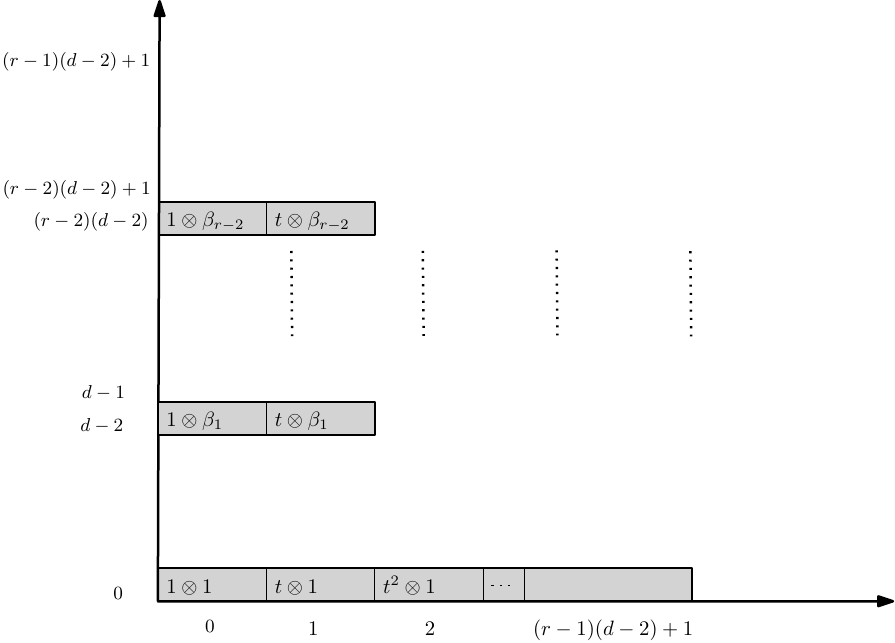}
\caption{\small For $d\geq 4$ even the $E_{\infty}$-term of the Serre spectral sequence \eqref{eq : E2 term}.}\label{fig : 07}
\end{figure}

\subsubsection[$d$ is an odd integer, $d\geq 3$]{$\boldsymbol{d}$ is an odd integer, $\boldsymbol{d\geq 3}$}
\label{subsub -- 02}

The Serre spectral sequence $E^{*,*}_{*}\big(\E(\Z_r)\times_{\Z_r} \GG\big(S^{d-1},r\big)\big)$ has a multiplicative structure. Hence, to compute it completely, we will determine values of the differentials on the generators of the cohomology ring of the fiber.
In this particular case, we need to evaluate the differentials on the elements $\gamma,\delta_1,\ldots,\delta_{\frac{r-3}2}$.
Observe that
\[
(1\otimes\delta_1)^i=1\otimes\delta_1^i=a_i\cdot(1\otimes\delta_i)
\]
for $1\leq i\leq \frac{r-3}2$ and some $a_i\in\F_r{\setminus}\{0\}$.

Similarly to Section \ref{subsub -- 01}, we consider first the values of the differential $\partial_2$ on the generators.
Since the generator $\delta_1$ is of degree $2$ and $H^1\big(\GG\big(S^{d-1},r\big);\F_r\big)=0$ we have that $\partial_2(1\otimes\delta_1)=0$.
Further, the Leibniz rule implies that
\[
\partial_2 (1\otimes\delta_i)= a_i\cdot \partial_2 \big( (1\otimes\delta_1)^i\big)=(ia_i)\cdot\partial_2(1\otimes\delta_1)=0
\]
for $1\leq i\leq \tfrac{r-3}2$ and some $a_i\in\F_r{\setminus}\{0\}$.
Thus, in order to determine the second differential $\partial_2$, we need to determine
\[
\partial_2(1\otimes\gamma)\in E^{2,2(d-2)}_2\big(\E(\Z_r)\times_{\Z_r} \GG\big(S^{d-1},r\big)\big).
\]

Assume that $\partial_2(1\otimes\gamma)=0$.
Then the multiplicative property of the Serre spectral sequence implies that the second differential $\partial_2$ gives
\[
E^{*,*}_{2}\big(\E(\Z_r)\times_{\Z_r} \GG\big(S^{d-1},r\big)\big)\cong E^{*,*}_{3}\big(\E(\Z_r)\times_{\Z_r} \GG\big(S^{d-1},r\big)\big).
\]
The vanishing of cohomology groups $H^*\big(\GG\big(S^{d-1},r\big);\F_r\big)$ in appropriate dimensions implies that the next possible non-zero differential is $\partial_{2(d-2)+1}$.
More precisely, $\partial_{2(d-2)+1}(1\otimes\delta_1)$ might be non-zero, while we know that $\partial_{2(d-2)+1}(1\otimes\gamma)=0$.
Since $2(d-2)+1\leq (r-1)(d-2)$ from Corollary~\ref{cor : differentials in zero row}, we have that $\partial_{2(d-2)+1}(1\otimes\delta_1)=0$.
Consequently, the differential $\partial_{2(d-2)+1}$ vanishes.
About the next differential we know that $\partial_{2(d-2)+2}(1\otimes\delta_1)=0$.
Since $2(d-2)+2\leq (r-1)(d-2)$, again Corollary~\ref{cor : differentials in zero row} implies that $\partial_{2(d-2)+2}(1\otimes\gamma)=0$. Therefore, the differential $\partial_{2(d-2)+2}$ also vanishes.
This means that
\[
E^{*,*}_{2}\big(\E(\Z_r)\times_{\Z_r} \GG\big(S^{d-1},r\big)\big)\cong \cdots \cong E^{*,*}_{2(d-2)+3}\big(\E(\Z_r)\times_{\Z_r} \GG\big(S^{d-1},r\big)\big).
\]
For $s\geq 2(d-2)+3$, all differentials $\partial_s(1\otimes\delta_1)$ and $\partial_s(1\otimes\gamma)$ are zero.
Hence, the multiplicative property of the Serre spectral sequence implies that, for $s\geq 2(d-2)+3$, all the differentials $\partial_s$ vanish.
Thus
\[
E^{*,*}_{2}\big(\E(\Z_r)\times_{\Z_r} \GG\big(S^{d-1},r\big)\big)\cong E^{*,*}_{\infty}\big(\E(\Z_r)\times_{\Z_r} \GG\big(S^{d-1},r\big)\big),
\]
which is a contradiction to the fact, observed after Corollary~\ref{cor : differentials in zero row}, that at least one of the differentials
\[
\partial_{s}\colon \
E_{s}^{p,s-1}\big(\E(\Z_r)\times _{\Z_r}\GG\big(S^{d-1},r\big)\big)\longrightarrow E_{s}^{p+s,0}\big(\E(\Z_r)\times_{\Z_r}\GG\big(S^{d-1},r\big)\big),
\]
for $s\geq (r-1)(d-2)+1$, does not vanish. Hence, $\partial_2(1\otimes\gamma)\neq 0$.

Now set $\partial_2(1\otimes\gamma)=a\cdot (t\otimes\delta_1)$ where $a\in\F_r{\setminus}\{0\}$.
The multiplicative property yields
\begin{align*}
\partial_2(1\otimes \gamma\delta_i) &=
\partial_2((1\otimes \gamma) (1\otimes \delta_i))
= \partial_2(1\otimes \gamma) (1\otimes \delta_i)+(-1)^{d-1}(1\otimes \gamma)\partial_2(1\otimes \delta_i)\\
&= \partial_2(1\otimes \gamma) (1\otimes \delta_i)= a\cdot (t\otimes\gamma_1) (1\otimes \gamma_i)\\
&= \begin{cases}
	\left(a\frac{(i+1)!}{i!1!}\right)\cdot (t\otimes\gamma_{i+1}), & 1\leq i\leq \tfrac{r-3}2,\\
	0, & i=r-2.
\end{cases}
\end{align*}
Consequently, the $E_3$-term is
\begin{gather*}
E^{p,q}_{3}\big(\E(\Z_r)\times_{\Z_r} \GG\big(S^{d-1},r\big)\big)\\
\qquad{} \cong
\begin{cases}
	H^p(\Z_r;\F_r), & q\in\{0,(r-1)(d-2)+1\}, \\
	\F_r, & p\in\{0,1\},\,q\in\big\{2(d-2),\ldots, \tfrac{(r-3)}{2}2(d-2)\big\},\\
	0, & \text{otherwise}.
\end{cases}	
\end{gather*}
Further, the multiplicative property implies that all the differentials $\partial_3,\ldots,\partial_{(r-1)(d-2)+1}$ vanish. Thus,
\[
E^{*,*}_{3}\big(\E(\Z_r)\times_{\Z_r} \GG\big(S^{d-1},r\big)\big)\cong  \cdots  \cong E^{*,*}_{(r-1)(d-2)+2}\big(\E(\Z_r)\times_{\Z_r} \GG\big(S^{d-1},r\big)\big).
\]

The differential $\partial_{(r-1)(d-2)+2}$ is the last remaining differential that can be non-zero.
Since for all~$p$
\[
E^{p,0}_{2}\big(\E(\Z_r)\times_{\Z_r} \GG\big(S^{d-1},r\big)\big)\cong \cdots  \cong E^{p,0}_{(r-1)(d-2)+2}\big(\E(\Z_r)\times_{\Z_r} \GG\big(S^{d-1},r\big)\big),
\]
and we know that
\[
E_{2}^{p,0}\big(\E(\Z_r)\times _{\Z_r}\GG\big(S^{d-1},r\big)\big)\neq 0 = E_{\infty}^{p,0}\big(\E(\Z_r)\times _{\Z_r}\GG\big(S^{d-1},r\big)\big)
\]
for $p\geq r(d-1)$, we have that indeed $\partial_{(r-1)(d-2)+2}\neq 0$.
The multiplicative property again yields that $\partial_{(r-1)(d-2)+2}$ is determined by its image at $1\otimes\gamma\delta_{\frac{r-3}2}$ and has to be non-zero.
Set
\[
\partial_{(r-1)(d-2)+2}\big(1\otimes\gamma\delta_{\frac{r-3}2}\big)=b\cdot\big(t^{\frac{(r-1)(d-2)+2}2}\otimes 1\big)
\]
for some $b\in\F_r{\setminus}\{0\}$. Thus, the homomorphisms
\begin{gather*}
\partial_{(r-1)(d-2)+2}\colon \
E_{s}^{p,(r-1)(d-2)+1}\big(\E(\Z_r)\times _{\Z_r}\GG\big(S^{d-1},r\big)\big)\\
\hphantom{\partial_{(r-1)(d-2)+2}\colon}{} \ \longrightarrow E_{s}^{p+(r-1)(d-2)+2,0}\big(\E(\Z_r)\times_{\Z_r}\GG\big(S^{d-1},r\big)\big)
\end{gather*}
are isomorphism for all $p\geq 0$. Hence, the $E_{\infty}$-term is given by
\begin{gather}
E^{p,q}_{\infty}\big(\E(\Z_r)\times_{\Z_r} \GG\big(S^{d-1},r\big)\big)\nonumber\\
\qquad{} \cong
\begin{cases}
	H^p(\Z_r;\F_r), & q=0,\,0\leq p \leq (r-1)(d-2)+1, \\
	\F_r, & p\in\{0,1\},\,q\in\big\{2(d-2),\ldots, \tfrac{(r-3)}{2}2(d-2)\big\},\\
	0, & \text{otherwise}.
\end{cases}	\label{eq : E_{infty} -- 2}
\end{gather}
Furthermore, we have obtained again that the map $\pi_G^*$, induced by the projection map of the fibration \eqref{eq : fibration} in cohomology
\begin{equation}\label{eq : isomorphism -- 2}
	\pi_G^*\colon \ H^p(\Z_r;\F_r)\longrightarrow H^p\big(\E(\Z_r)\times_{\Z_r} \GG\big(S^{d-1},r\big);\F_r\big),
\end{equation}
is an injection for $0\leq p\leq (r-1)(d-2)+1$.

\subsection{Proof of Theorem \ref{th : topological--main--1}}

Recall that for every $n\in\N$ we have isomorphisms of vector spaces
\[
H^n\big(\GG\big(S^{d-1},r\big)/\Z_r;\F_r\big)\cong H^n\big(\E(\Z_r)\times_{\Z_r} \GG\big(S^{d-1},r\big);\F_r\big)\cong \bigoplus_{p+q=n}E^{p,q}_{\infty}.
\]
Since $H^p(\Z_r;\F_r) = \F_r$ for all $p$, the relations~\eqref{eq : E_{infty} -- 1} for the case of $d\geq 4$ even, and~\eqref{eq : E_{infty} -- 2} for the case of $d\geq 3$ odd, imply the statement of the theorem.

\subsection{Proof of Theorem \ref{th : topological--main--2}}\label{proof of Thm. 3.2}

In order to use the results of Section \ref{sec : Computing the spectral sequence} for a proof of an estimate on the Lusternik--Schnirelmann category of the unlabeled cyclic configuration space $\GG\big(S^{d-1},r\big)/\Z_r$, we first recall some basic notions and results concerning the Lusternik--Schnirelmann category.

The \emph{Lusternik--Schnirelmann category} of a topological space $X$, denoted by $\cat(X)$, is the smallest integer $k$ for which the space $X$ can be covered by $k+1$ open subsets $U_1, U_2,\ldots, U_{k+1}$ with the property that all inclusions $U_i \longrightarrow X$ are nullhomotopic.
Some key properties of the Lusternik--Schnirelmann category that we will use are given in the next lemma, see, e.g.,~\cite{Cornea2003}.

\begin{Lemma}\label{lem:LS-1}\qquad
\begin{enumerate}\itemsep=0pt
\item[\rm (1)] If $X$ is homotopy equivalent to $Y$, then $\cat(X)=\cat(Y)$.
\item[\rm (2)] If $p\colon X\longrightarrow Y$ is a covering, then $\cat(X)\leq\cat(Y)$.
\item[\rm (3)] If $X$ is a $(k-1)$-connected $CW$-complex, then $\cat(X)\leq\tfrac{1}{k}\dim(X)$.
\end{enumerate}
\end{Lemma}

Let $X$ be a topological space, and let $R$ be a commutative ring with unit.
We need the notion of the \emph{category weight} of an element $u\in H^*(X;R)$. Originally introduced by Fadell and Husseini~\cite{Fadell1992}, we use the homotopy invariant version of this notion due to Rudyak~\cite{Rudyak1990} and Strom~\cite{Strom1997}. See \cite[Section~2.7, p.~62; Section~8.3, p.~240]{Cornea2003} for details.

In the next lemma we list properties of the category weight that we will use in the proof of Theorem~\ref{th : topological--main--2}.
For more details on this lemma consult for example \cite[Proposition~8.22, pp.~242--243, p.~259]{Cornea2003} and \cite[Proposition~2.2(3)]{Roth2008}.

\begin{Lemma}\label{lem:LS-2}Let $R$ be a commutative ring with unit.
\begin{enumerate}\itemsep=0pt
\item[\rm (1)] If $0\neq u\in H^{\ell}(X;R)$, then $\wgt(u)\leq\cat(X)$.
\item[\rm (2)] Let $f\colon X\longrightarrow Y$ be a continuous map, and let $u\in H^{\ell}(Y;R)$.
 If $0\neq f^*(u)\in H^{\ell}(X;R)$, then $\wgt(u)\leq\wgt(f^*(u))$.
\item[\rm (3)] If $G$ is a finite group and $0\neq u\in H^{\ell}(\B G;R)$, then $\ell = \wgt(u)$.
\end{enumerate}
\end{Lemma}

In the following we combine previously established results to give a proof of Theorem~\ref{th : topological--main--2}.
Recall that the group $\Z_r$ acts freely on the cyclic configuration space $\GG\big(S^{d-1},r\big)$ and consider the following diagram, which commutes up to a homotopy
\begin{equation}	\label{diag : up to homotopy}
	\xymatrix@1{
\E(\Z_r)\times_{\Z_r} \GG\big(S^{d-1},r\big)\ar[r]^-{p} \ar@/^1.5pc/[rr]^-{\pi_G} & \GG\big(S^{d-1},r\big)/\Z_r\ar[r]^-{c} & \B(\Z_r),
}
\end{equation}
where the map $p$ is induced by the projection on the second factor $\E(\Z_r)\times \GG\big(S^{d-1},r\big)\longrightarrow \GG\big(S^{d-1},r\big)$ and is a homotopy equivalence, the map $\pi_G$ is the projection in the Borel construction fibration \eqref{eq : fibration} induced by the projection on the first factor $\E(\Z_r)\times \GG\big(S^{d-1},r\big)\longrightarrow \E(\Z_r)$, and $c$ is a classifying map associated to the free $\Z_r$ action on $\GG\big(S^{d-1},r\big)$.
Uniqueness of the classifying map up to a homotopy implies that the diagram \eqref{diag : up to homotopy} commutes up to a homotopy; for background see for example \cite[Section~II.1]{Adem2004}.

Applying the cohomology functor $H^*(\,\cdot\,;\F_r)$ to the diagram \eqref{diag : up to homotopy}, we get the following diagram of abelian groups that commutes up to an isomorphism
\begin{gather}
\xymatrix@1{
H^*\big(\E(\Z_r)\times_{\Z_r} \GG\big(S^{d-1},r\big);\F_r\big) &
\ar[l]_-{p^*} H^*\big( \GG\big(S^{d-1},r\big)/\Z_r\big) &
\ar[l]_-{c^*}\ar@/_1.5pc/[ll]_-{\pi_G^*}H^*(\B(\Z_r);\F_r)}\nonumber\\
\qquad{}\cong H^*(\Z_r;\F_r) . \label{diag : up to an isomorphism}
\end{gather}
Let
\[
u\coloneqq t^{\frac{(r-1)(d-2)}{2}}e\in H^{(r-1)(d-2)+1}(\Z_r;\F_r).
\]
Then, according to \eqref{eq : isomorphism -- 1} and \eqref{eq : isomorphism -- 2}, we have that $\pi_G^*(u)\neq 0$. Consequently, from diagram~\eqref{diag : up to an isomorphism}, we get $c^*(u)\neq 0$.
Now Lemma~\ref{lem:LS-2} yields that
\begin{gather*}
\wgt(u)=(r-1)(d-2)+1,\qquad
\wgt(u)\leq\wgt(c^*(u)),\\
\wgt(c^*(u))\leq \cat\big(\GG\big(S^{d-1},r\big)/\Z_r\big).
\end{gather*}
Hence,
\[
\cat\big(\GG\big(S^{d-1},r\big)/\Z_r\big)\geq (r-1)(d-2)+1,
\]
and the proof of Theorem~\ref{th : topological--main--2} is complete.\qed

\section{Proofs of the main results}

According to Section \ref{sec:morse}, the number of $\Z_r$-orbits of the critical points of the length function on the cyclic configuration space $\GG(M,r)$ is bounded below by the category of the space $\GG\big(S^{d-1},r\big)/\Z_r$ (the Lusternik--Schnirelmann theory) or, in general position, by the sum of Betti numbers of $\GG\big(S^{d-1},r\big)/\Z_r$ (the Morse theory).
Thus item~(A) of Theorem~\ref{mainthm} follows from Theorem~\ref{th : topological--main--2}, and item (B) follows from Theorem~\ref{th : topological--main--1}.

\subsection*{Acknowledgements}
We are grateful to Sergei Ivanov for useful discussions on Finsler geometry, and we are grateful to the following sources of funding.
Pavle V. M. Blagojevi\'c, Serge Tabachnikov, and G\"unter M. Ziegler were supported by the DFG via the Collaborative Research Center TRR~109 ``Discretization in Geometry and Dynamics''.
Pavle V. M. Blagojevi\'c was supported by the grant ON 174024 of Serbian Ministry of Education and Science.
Michael Harrison and Serge Tabachnikov were supported by the NSF grant DMS-1510055.
We are also grateful to the referees for their suggestions.

\pdfbookmark[1]{References}{ref}
\LastPageEnding


\begin{thebibliography}{99}
\footnotesize\itemsep=0pt

\bibitem{Adem2004}
Adem A., Milgram R.J., Cohomology of finite groups, 2nd ed., \textit{Grundlehren der
  Mathematischen Wissenschaften}, Vol.~309, \href{https://doi.org/10.1007/978-3-662-06280-7}{Springer-Verlag}, Berlin,
  2004.

\bibitem{Akopyan}
Akopyan A.V., Zaslavsky A.A., Geometry of conics, \textit{Mathematical World},
  Vol.~26, \href{https://doi.org/10.1090/mawrld/026}{Amer. Math. Soc.}, Pro\-vidence, RI, 2007.

\bibitem{Alvarez-Duran}
\'Alvarez J.C., Dur{\'a}n C., An introduction to {F}insler geometry, Notas de
  la Escuela Venezolana de Mat\'ematicas, 1998.

\bibitem{Artstein2}
Artstein-Avidan S., Karasev R., Ostrover Y., From symplectic measurements to
  the {M}ahler conjecture, \href{https://doi.org/10.1215/00127094-2794999}{\textit{Duke Math.~J.}} \textbf{163} (2014),
  2003--2022, \href{https://arxiv.org/abs/1303.4197}{arXiv:1303.4197}.

\bibitem{Artstein1}
Artstein-Avidan S., Ostrover Y., Bounds for {M}inkowski billiard trajectories
  in convex bodies, \href{https://doi.org/10.1093/imrn/rns216}{\textit{Int. Math. Res. Not.}} \textbf{2014} (2014), 165--193, \href{https://arxiv.org/abs/1111.2353}{arXiv:1111.2353}.

\bibitem{Babenko1990}
Babenko I.K., Periodic trajectories of three-dimensional {B}irkhoff billiards,
  \href{https://doi.org/10.1070/SM1992v071n01ABEH001273}{\textit{Math. USSR Sb.}} \textbf{71} (1992), 1--13.

\bibitem{Bangert}
Bangert V., On the existence of closed geodesics on two-spheres,
  \href{https://doi.org/10.1142/S0129167X93000029}{\textit{Internat.~J. Math.}} \textbf{4} (1993), 1--10.

\bibitem{Bao}
Bao D., Chern S.-S., Shen Z., An introduction to {R}iemann--{F}insler geometry,
  \textit{Graduate Texts in Mathematics}, Vol.~200, \href{https://doi.org/10.1007/978-1-4612-1268-3}{Springer-Verlag}, New York,
  2000.

\bibitem{Berglund}
Berglund N., Kunz H., Integrability and ergodicity of classical billiards in a
  magnetic field, \href{https://doi.org/10.1007/BF02183641}{\textit{J.~Statist. Phys.}} \textbf{83} (1996), 81--126,
  \href{https://arxiv.org/abs/chao-dyn/9501009}{arXiv:chao-dyn/9501009}.

\bibitem{Bialy}
Bialy M., On totally integrable magnetic billiards on constant curvature
  surface, \href{https://doi.org/10.3934/era.2012.19.112}{\textit{Electron. Res. Announc. Math. Sci.}} \textbf{19} (2012),
  112--119, \href{https://arxiv.org/abs/1208.2455}{arXiv:1208.2455}.

\bibitem{Birkhoff1927}
Birkhoff G.D., On the periodic motions of dynamical systems, \href{https://doi.org/10.1007/BF02421325}{\textit{Acta
  Math.}} \textbf{50} (1927), 359--379.

\bibitem{Blagojevic2015}
Blagojevi\'{c} P.V.M., L\"{u}ck W., Ziegler G.M., Equivariant topology of
  configuration spaces, \href{https://doi.org/10.1112/jtopol/jtv002}{\textit{J.~Topol.}} \textbf{8} (2015), 414--456,
  \href{https://arxiv.org/abs/1207.2852}{arXiv:1207.2852}.

\bibitem{Hilbert}
Busemann H., Problem {IV}: {D}esarguesian spaces, in Mathematical Developments
  Arising from {H}ilbert Problems ({P}roc. {S}ympos. {P}ure {M}ath., {N}orthern
  {I}llinois {U}niv., {D}e {K}alb, {I}ll., 1974), 1976, 131--141.

\bibitem{Chern1996}
Chern S.-S., Finsler geometry is just {R}iemannian geometry without the
  quadratic restriction, \textit{Notices Amer. Math. Soc.} \textbf{43} (1996),
  959--963.

\bibitem{Chernov-Markarian}
Chernov N., Markarian R., Chaotic billiards, \textit{Mathematical Surveys and
  Monographs}, Vol.~127, \href{https://doi.org/10.1090/surv/127}{Amer. Math. Soc.}, Providence, RI, 2006.

\bibitem{Cohen1976LNM533}
Cohen F.R., Lada T.J., May J.P., The homology of iterated loop spaces,
  \textit{Lecture Notes in Math.}, Vol.~533, \href{https://doi.org/10.1007/BFb0080464}{Springer-Verlag}, Berlin~-- New
  York, 1976.

\bibitem{Cornea2003}
Cornea O., Lupton G., Oprea J., Tanr\'{e} D., Lusternik--{S}chnirelmann
  category, \textit{Mathematical Surveys and Monographs}, Vol.~103, \href{https://doi.org/10.1090/surv/103}{Amer. Math.
  Soc.}, Providence, RI, 2003,.

\bibitem{Fadell1992}
Fadell E., Husseini S., Category weight and {S}teenrod operations,
\textit{Bol. Soc. Mat. Mexicana} \textit{37} (1992),
  151--161.

\bibitem{Fadell-Husseomo2001}
Fadell E., Husseini S., Geometry and topology of configuration spaces,
  \textit{Springer Monographs in Mathematics}, \href{https://doi.org/10.1007/978-3-642-56446-8}{Springer-Verlag}, Berlin, 2001.

\bibitem{Farber2002-II}
Farber M., Topology of billiard problems.~{II}, \href{https://doi.org/10.1215/S0012-7094-02-11536-1}{\textit{Duke Math.~J.}}
  \textbf{115} (2002), 587--621, \href{https://arxiv.org/abs/math.AT/0006085}{arXiv:math.AT/0006085}.

\bibitem{Farber-Tabachnikov2002-02}
Farber M., Tabachnikov S., Periodic trajectories in 3-dimensional convex
  billiards, \href{https://doi.org/10.1007/s002290200273}{\textit{Manuscripta Math.}} \textbf{108} (2002), 431--437,
  \href{https://arxiv.org/abs/math.DG/0106049}{arXiv:math.DG/0106049}.

\bibitem{Farber-Tabachnikov2002-01}
Farber M., Tabachnikov S., Topology of cyclic configuration spaces and periodic
  trajectories of multi-dimensional billiards, \href{https://doi.org/10.1016/S0040-9383(01)00021-0}{\textit{Topology}} \textbf{41}
  (2002), 553--589, \href{https://arxiv.org/abs/math.DG/9911226}{arXiv:math.DG/9911226}.

\bibitem{Fenchel}
Fenchel W., On the differential geometry of closed space curves, \href{https://doi.org/10.1090/S0002-9904-1951-09440-9}{\textit{Bull.
  Amer. Math. Soc.}} \textbf{57} (1951), 44--54.

\bibitem{Fomenko2016}
Fomenko A., Fuchs D., Homotopical topology, 2nd ed., \textit{Graduate Texts in
  Mathematics}, Vol.~273, \href{https://doi.org/10.1007/978-3-319-23488-5}{Springer}, Cham, 2016.

\bibitem{Franks}
Franks J., Geodesics on {$S^2$} and periodic points of annulus homeomorphisms,
  \href{https://doi.org/10.1007/BF02100612}{\textit{Invent. Math.}} \textbf{108} (1992), 403--418.

\bibitem{GutkinB}
Gutkin B., Hyperbolic magnetic billiards on surfaces of constant curvature,
  \href{https://doi.org/10.1007/s002200000346}{\textit{Comm. Math. Phys.}} \textbf{217} (2001), 33--53,
  \href{https://arxiv.org/abs/nlin.CD/0003033}{arXiv:nlin.CD/0003033}.

\bibitem{Gutkin2002}
Gutkin E., Tabachnikov S., Billiards in {F}insler and {M}inkowski geometries,
  \href{https://doi.org/10.1016/S0393-0440(01)00039-0}{\textit{J.~Geom. Phys.}} \textbf{40} (2002), 277--301.

\bibitem{Hatcher2002}
Hatcher A., Algebraic topology, Cambridge University Press, Cambridge, 2002.

\bibitem{Karasev2009}
Karasev R.N., Periodic billiard trajectories in smooth convex bodies,
  \href{https://doi.org/10.1007/s00039-009-0009-3}{\textit{Geom. Funct. Anal.}} \textbf{19} (2009), 423--428, \href{https://arxiv.org/abs/0905.1761}{arXiv:0905.1761}.

\bibitem{Katok}
Katok A.B., Ergodic perturbations of degenerate integrable {H}amiltonian
  systems, \textit{Izv. Akad. Nauk SSSR Ser. Mat.} \textbf{37} (1973),
  539--576.

\bibitem{Kozlov-Treshchev}
Kozlov V.V., Treshchev D.V., Billiards. A genetic introduction to the dynamics
  of systems with impacts, \textit{Translations of Mathematical Monographs},
  Vol.~89, Amer. Math. Soc., Providence, RI, 1991.

\bibitem{Laudenbach}
Laudenbach F., A {M}orse complex on manifolds with boundary, \href{https://doi.org/10.1007/s10711-010-9555-y}{\textit{Geom.
  Dedicata}} \textbf{153} (2011), 47--57, \href{https://arxiv.org/abs/1003.5077}{arXiv:1003.5077}.

\bibitem{Mazzucchelli}
Mazzucchelli M., On the multiplicity of non-iterated periodic billiard
  trajectories, \href{https://doi.org/10.2140/pjm.2011.252.181}{\textit{Pacific~J. Math.}} \textbf{252} (2011), 181--205,
  \href{https://arxiv.org/abs/1012.5593}{arXiv:1012.5593}.

\bibitem{McCleary2001}
McCleary J., A user's guide to spectral sequences, 2nd ed., \textit{Cambridge Studies in
  Advanced Mathematics}, Vol.~58, Cambridge University Press,
  Cambridge, 2001.

\bibitem{Robnik}
Robnik M., Berry M.V., Classical billiards in magnetic fields,
  \href{https://doi.org/10.1088/0305-4470/18/9/019}{\textit{J.~Phys.~A: Math. Gen.}} \textbf{18} (1985), 1361--1378.

\bibitem{Roth2008}
Roth F., On the category of {E}uclidean configuration spaces and associated
  fibrations, in Groups, Homotopy and Configuration Spaces, \textit{Geom.
  Topol. Monogr.}, Vol.~13, \href{https://doi.org/10.2140/gtm.2008.13.447}{Geom. Topol. Publ.}, Coventry, 2008, 447--461, \href{https://arxiv.org/abs/0904.1013}{arXiv:0904.1013}.

\bibitem{Rudyak1990}
Rudyak Yu.B., On category weight and its applications, \href{https://doi.org/10.1016/S0040-9383(97)00101-8}{\textit{Topology}}
  \textbf{38} (1999), 37--55.

\bibitem{Shen}
Shen Y.-B., Shen Z., Introduction to modern {F}insler geometry, \href{https://doi.org/10.1142/9726}{World Scientific Publishing Co.}, Singapore, 2016.

\bibitem{Strom1997}
Strom J.A., Category weight and essential category weight, Ph.D.~Thesis, {T}he
  University of Wisconsin, USA, 1997.

\bibitem{Tabachnikov1995}
Tabachnikov S., Billiards, \textit{Panor. Synth.} \textbf{1} (1995),
  vi+142~pages.

\bibitem{Tabachnikov2004}
Tabachnikov S., Remarks on magnetic flows and magnetic billiards, {F}insler
  metrics and a magnetic analog of {H}ilbert's fourth problem, in Modern
  Dynamical Systems and Applications, Cambridge University Press, Cambridge,
  2004, 233--250, \href{https://arxiv.org/abs/math.DG/0302288}{arXiv:math.DG/0302288}.

\bibitem{Tabachnikov2005}
Tabachnikov S., Geometry and billiards, \textit{Student Mathematical Library},
  Vol.~30, \href{https://doi.org/10.1090/stml/030}{Amer. Math. Soc.}, Providence, RI, 2005.

\bibitem{Tasnadi}
Tasn\'{a}di T., The behavior of nearby trajectories in magnetic billiards,
  \href{https://doi.org/10.1063/1.531723}{\textit{J.~Math. Phys.}} \textbf{37} (1996), 5577--5598.

\bibitem{Zharnitsky}
Zharnitsky V., Invariant tori in {H}amiltonian systems with impacts,
  \href{https://doi.org/10.1007/s002200050813}{\textit{Comm. Math. Phys.}} \textbf{211} (2000), 289--302.

\end{thebibliography}
\end{document}